\newcommand{\bd}{\begin{description}}
\newcommand{\ed}{\end{description}}
\newcommand{\bi}{\begin{itemize}}
\newcommand{\ei}{\end{itemize}}
\newcommand{\be}{\begin{enumerate}}
\newcommand{\ee}{\end{enumerate}}
\newcommand{\beq}{\begin{equation}}
\newcommand{\eeq}{\end{equation}}
\newcommand{\beqs}{\begin{eqnarray*}}
\newcommand{\eeqs}{\end{eqnarray*}}
\newcommand{\ceil}[1]{\left\lceil #1 \right\rceil}
\definecolor{DarkGreen}{rgb}{0.2, 0.6, 0.3}
\newtheorem{theorem}{Theorem}
\newtheorem{lemma}{Lemma}
\newtheorem{case}{Case}
\newtheorem{subcase}{Subcase}[case]
\newtheorem{claim}{Claim}
\newtheorem{fact}{Fact}
\newtheorem{proposition}{Proposition}
\begin{document}
\title{\textbf{Gallai-Ramsey number for the union of stars\footnote{Supported by the National Science Foundation of China
        (Nos. 11601254, 11551001, 11161037, and 11461054) and the Science
        Found of Qinghai Province (Nos. 2016-ZJ-948Q).}}}

\author{
Yaping Mao\footnote{School of Mathematics and Statistics, Qinghai
Normal University, Xining, Qinghai 810008, China. {\tt
maoyaping@ymail.com}} \footnote{Academy of Plateau Science and Sustainability, Xining, Qinghai 810008, China}, \ \ Zhao Wang\footnote{College of Science,
China Jiliang University, Hangzhou 310018, China. {\tt
wangzhao@mail.bnu.edu.cn}}, \ \  Colton Magnant\footnote{Department
of Mathematics, Clayton State University, Morrow, GA, 30260, USA.
{\tt dr.colton.magnant@gmail.com}} \footnotemark[2], \ \  Ingo
Schiermeyer\footnote{Technische Universit{\"a}t Bergakademie
Freiberg, Institut f{\"u}r Diskrete Mathematik und Algebra, 09596
Freiberg, Germany. {\tt Ingo.Schiermeyer@tu-freiberg.de}}. }

\date{}
\maketitle

\begin{abstract}
Given a graph $G$ and a positive integer $k$, define the
\emph{Gallai-Ramsey number} to be the minimum number of vertices $n$
such that any $k$-edge coloring of the complete graph
$K_n$ contains either a rainbow
(all different colored) triangle or a monochromatic copy of $G$. In
this paper, we obtain the exact value of the Gallai-Ramsey numbers for
the union of two stars in many cases and bounds in other cases. This
work represents the first class of disconnected graphs to be
considered as the desired monochromatic subgraph.
\end{abstract}

\section{Introduction}

In this work, we consider only edge-colorings of graphs. A coloring
of a graph is called \emph{rainbow} if no two edges have the same
color.

Colorings of complete graphs that contain no rainbow triangle have
very interesting and rather surprising structure. In 1967, Gallai
\cite{MR0221974} first examined this structure using the terminology
transitive orientations. The result was reproven in \cite{MR2063371}
in the terminology of graphs and can also be traced to
\cite{MR1464337}. For the following statement, a trivial partition
is a partition into only one part.

\begin{theorem}[\cite{MR1464337,MR0221974,MR2063371}]\label{Thm:G-Part}
In any coloring of a complete graph containing no rainbow triangle, there exists a nontrivial partition of the vertices (that is, with at least two parts) such that there are at most two colors on the edges between the parts and only one color on the edges between each pair of parts.
\end{theorem}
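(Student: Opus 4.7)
The plan is to induct on $n$, the order of the complete graph. For $n \le 2$ the partition into singletons satisfies the conclusion, so assume $n \ge 3$ and let a Gallai coloring of $K_n$ be given, with color set $C$. If $|C|\le 1$ the result is again immediate, so assume $|C|\ge 2$.

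The first favorable case is that there exists a color $c\in C$ such that the spanning subgraph $H_c := K_n \setminus G_c$, consisting of all edges \emph{not} of color $c$, is disconnected. If $W_1,\dots,W_s$ are its components, then every edge running between two distinct $W_i$'s must have color $c$, so the partition $\{W_1,\dots,W_s\}$ has a single color appearing between its parts, clearly satisfying the conclusion.

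The second favorable case is that some color class $G_c$ itself is disconnected, with components $V_1,\dots,V_t$. I would claim that $\{V_1,\dots,V_t\}$ is the desired partition. To verify that all edges between any two components $V_i,V_j$ share one color, fix $v\in V_j$ and any $c$-edge $uu'$ inside $V_i$; the triangle $uu'v$ contains an edge of color $c$, so it cannot be rainbow, forcing the edges $uv$ and $u'v$ to share a color (which cannot be $c$, else $V_i$ and $V_j$ would lie in the same $c$-component). Propagating this observation along $c$-paths inside $V_i$ and then inside $V_j$ shows that every $V_i$--$V_j$ edge carries a common color $c_{ij}\neq c$. To verify that overall at most two colors appear between parts, take representatives $x_i,x_j,x_k$ from three components: the triangle on them has colors $c_{ij},c_{jk},c_{ik}$, and absence of a rainbow triangle forces $|\{c_{ij},c_{jk},c_{ik\}|\le 2}$; varying the triple extends this to all cross-part edges.

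I expect the main obstacle to be showing that at least one of these two favorable cases always occurs when $|C| \ge 2$. If neither held, then for every color $c$ both $G_c$ and its complement $H_c$ in $K_n$ would be connected and spanning. To derive a contradiction I would pick a vertex $v$ and partition the remaining vertices into ``color cones'' $N_c(v)$ for $c\in C$. Using the no-rainbow-triangle condition on triangles through $v$, I would show that edges between $N_c(v)$ and $N_{c'}(v)$ (for $c\neq c'$) can only carry colors from $\{c,c'\}$, which is a very restrictive local structure. I would then try either to produce a rainbow triangle (ruling out the assumed ``all-connected'' situation) or to assemble the cones into the desired partition directly, perhaps by merging cones whose edge-color patterns agree and appealing to the inductive hypothesis on the resulting reduced graph. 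This local-to-global step, rather than the triangle-pushing of the previous paragraph, is where the real work of the theorem lies.
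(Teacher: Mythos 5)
This theorem is quoted by the paper from the literature (Gallai; Cameron--Edmonds; Gy\'arf\'as--Simonyi) and no proof is given there, so your attempt has to stand on its own; as written it does not. The concrete failure is in your second favorable case. When some color class $G_c$ is disconnected with components $V_1,\dots,V_t$, your triangle argument correctly shows that each bipartite graph between two components is monochromatic in some color $c_{ij}\neq c$, and that no three components form a rainbow triangle in the reduced graph. But that last fact only says the reduced coloring of $K_t$ is again a Gallai coloring; it does \emph{not} imply that at most two colors appear among the $c_{ij}$, which is what the theorem demands. Explicitly: on vertices $a_1,a_2,b,c,d$ color $a_1a_2$ with color $4$, the edges $a_1b,a_2b$ with color $1$, the edge $cd$ with color $2$, and all remaining edges with color $3$. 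Every triangle repeats a color, so this is a Gallai coloring; $G_4$ is disconnected with components $\{a_1,a_2\},\{b\},\{c\},\{d\}$; yet the colors between these parts are $1$, $2$ and $3$. So ``varying the triple'' does not extend to all cross-part edges. The standard repair is to note that since color $c$ is actually used, some component has at least two vertices, hence $2\le t<n$, and to apply the inductive hypothesis to the reduced Gallai coloring of $K_t$ and pull the resulting partition back as a coarsening of $\{V_1,\dots,V_t\}$ --- but that is not what you wrote, and the claim as stated is false.

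The second, larger problem is the one you yourself flag: you never establish that one of your two favorable cases must occur (equivalently, when at least three colors are used, that some color class is disconnected as a spanning subgraph), nor do you handle the residual case. The sketch via color cones $N_c(v)$ does not obviously close: the cones are not modules (a vertex of $N_{c'}(v)$ may send edges of both colors $c$ and $c'$ into $N_c(v)$), so they cannot serve directly as parts, and you give no mechanism by which merging cones terminates in a partition with a two-colored quotient. This step is precisely where the content of Gallai's theorem lies --- Gy\'arf\'as and Simonyi, for instance, handle it by inducting on the number of colors, merging two color classes into one, applying the inductive hypothesis, and then repairing the partition when the merged color appears between parts. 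Without an argument of that kind, the proof is a reduction of the theorem to its hardest case rather than a proof.
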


For ease of notation, we refer to a colored complete graph with no
rainbow triangle as a \emph{Gallai-coloring} and the partition
provided by Theorem~\ref{Thm:G-Part} as a \emph{Gallai-partition}.
The induced subgraph of a Gallai colored complete graph constructed
by selecting a single vertex from each part of a Gallai partition is
called the \emph{reduced graph} of that partition. By
Theorem~\ref{Thm:G-Part}, the reduced graph is a $2$-colored
complete graph.

Given two graphs $G$ and $H$, let $R(G, H)$ denote the $2$-color
Ramsey number for finding a monochromatic $G$ or $H$, that is, the
minimum number of vertices $n$ needed so that every red-blue
coloring of $K_{n}$ contains either a red copy of $G$ or a blue copy
of $H$. Although the reduced graph of a Gallai partition uses only
two colors, the original Gallai-colored complete graph could
certainly use more colors. With this in mind, we consider the
following generalization of the Ramsey numbers. Given two graphs $G$
and $H$, the \emph{general $k$-colored Gallai-Ramsey number} ${\rm
gr}_k(G:H)$ is defined to be the minimum integer $m$ such that every
$k$-coloring of the complete graph on $m$ vertices contains either a
rainbow copy of $G$ or a monochromatic copy of $H$. With the
additional restriction of forbidding the rainbow copy of $G$, it is
clear that ${\rm gr}_k(G:H)\leq {\rm R}_k(H)$ for any graph $G$.

We refer the interested reader to \cite{MR1670625} for a dynamic
survey of small Ramsey numbers and \cite{FMO14} for a dynamic survey
of rainbow generalizations of Ramsey theory, including topics like
Gallai-Ramsey numbers. One may notice that all of the results
contained in the dynamic survey regarding Gallai-Ramsey numbers
consider monochromatic subgraphs that are connected. In this work,
we consider the Gallai-Ramsey numbers for finding either a rainbow
triangle or monochromatic copy of $K_{1, n} \cup K_{1, m}$, this
being the first examination of disconnected monochromatic subgraphs.

The star was one of the first monochromatic graphs to be considered
in the context of Gallai-Ramsey numbers.

\begin{theorem}{\upshape \cite{FM11}}\label{Thm:K3-Star}
For $m \geq 2$ and $k \geq 2$,
$$
{\rm gr}_{k}(K_{3} : K_{1, m}) = \begin{cases}
\frac{5m - 6}{2} & \text{ if $m$ is even,}\\
\frac{5m - 3}{2} & \text{ if $m$ is odd.}
\end{cases}
$$
\end{theorem}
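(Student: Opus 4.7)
For the lower bound, my plan is to exhibit, for each $k\geq 2$, a Gallai $k$-coloring of $K_{n-1}$ (with $n$ the claimed value) containing no monochromatic $K_{1,m}$. The natural candidate is a blow-up of the well-known 2-coloring of $K_5$ whose red edges form a $C_5$ and whose blue edges form the complementary $C_5$: replace vertex $i$ by an independent set $V_i$ of size $a_i$, color all edges between $V_i$ and $V_j$ with the color assigned to $ij$ in the reduced $K_5$, and color the interior of each $V_i$ with colors $3,\ldots,k$ chosen so as to avoid any rainbow triangle (for example, a single fresh color per $V_i$). A vertex in $V_i$ has red external degree $a_{i-1}+a_{i+1}$ and blue external degree $a_{i-2}+a_{i+2}$, and since these pair sums enumerate every pair $\{a_j,a_k\}$ with $j\neq k$, the absence of a monochromatic $K_{1,m}$ reduces to the pairwise constraint $a_j+a_k\leq m-1$. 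I take all $a_i=(m-1)/2$ when $m$ is odd and $(a_1,\ldots,a_5)=(m/2,(m-2)/2,(m-2)/2,(m-2)/2,(m-2)/2)$ when $m$ is even, yielding totals $(5m-5)/2$ and $(5m-8)/2$ respectively, which are exactly $n-1$.

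For the upper bound, I would proceed by induction on $k$ and, given a Gallai $k$-coloring of $K_n$ with $n$ at least the claimed value, apply Theorem~\ref{Thm:G-Part} to obtain a Gallai partition $V_1,\ldots,V_t$ with $t\geq 2$ and a reduced graph $R$ on $t$ vertices 2-colored in red and blue. Writing $n_i=|V_i|$, I note that if any vertex of $V_i$ has red or blue external degree $\geq m$ the conclusion is immediate, so I may assume $\sum_{j\in N_{\mathrm{red}}(i)}n_j\leq m-1$ and $\sum_{j\in N_{\mathrm{blue}}(i)}n_j\leq m-1$ for every $i$, which already forces $n_i\geq n-2(m-1)$.

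I then split into cases on $t$. If $t=2$, the unique color (say red) between $V_1$ and $V_2$ forces $n_2\leq m-1$, hence $n_1\geq n-(m-1)$ is large; moreover every vertex of $V_1$ has red degree at most $m-1-n_2$ inside $V_1$, and in the tight regime this eliminates red inside $V_1$ altogether, leaving a Gallai $(k-1)$-coloring on $V_1$ to which I apply the inductive hypothesis. If $t\geq 3$, I use $R(K_3,K_3)=6$: either $R$ contains a monochromatic triangle $ijk$, giving $n_i+n_j\leq m-1$, $n_j+n_k\leq m-1$, $n_i+n_k\leq m-1$ and hence a strong global bound; or $R$ is monochromatic-triangle-free, forcing $t\leq 5$ and (for $t=5$) the reduced graph to be exactly the two complementary $C_5$'s. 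In this last case the constraints $n_i+n_j\leq m-1$ for all $i\neq j$ combined with the integer optimization from the lower bound give $n\leq (5m-5)/2$ or $(5m-8)/2$, contradicting the assumption on $n$.

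\emph{Main obstacle.} The tight constants $(5m-6)/2$ versus $(5m-3)/2$ are sensitive to parity, so a loss of even a single unit anywhere in the analysis destroys the result; the crux is the $t=2$ subcase, where one must squeeze the recursion on $V_1$ with one fewer color without giving up any slack, and the $t=5$ subcase, where the integer optimization must be carried out exactly for each parity of $m$. Carefully matching these bounds against the extremal blow-up of $C_5$, and verifying that the induction on $k$ transports the tight constant for all $k\geq 2$, will be the main technical burden.
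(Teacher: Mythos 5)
First, a point of reference: the paper does not prove this statement at all --- Theorem~\ref{Thm:K3-Star} is quoted from \cite{FM11} and used as a black box (e.g.\ in Claim~\ref{Claim12}), so there is no in-paper proof to compare yours against. Judged on its own, your outline is the standard one for this result (Gallai partition, reduced $2$-colored graph, the observation that every part must have order at least $n-2(m-1)$, and the blow-up of the triangle-free $2$-coloring of $K_5$ as the extremal example), and the $t\ge 3$ analysis together with the parity-sensitive integer optimization for $t=5$ can indeed be pushed through.

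There are, however, two concrete gaps. (a) Your lower-bound construction does not work for $k=2$: the interiors of the five parts are cliques of $K_{n-1}$ and must be colored, and with only two colors available any interior edge raises some vertex's red or blue degree to $m$, since the external red and blue degrees are already $m-1$. This is not a repairable detail --- for $k=2$ one has ${\rm gr}_2(K_3:K_{1,m})=R(K_{1,m},K_{1,m})$, which is $2m-1$ ($m$ even) or $2m$ ($m$ odd), strictly smaller than the claimed value for $m\ge 5$; so the statement as transcribed (with $k\ge 2$) cannot be proved and should be read as $k\ge 3$, and your proof should say so. (b) The $t=2$ step of your induction does not close: after deleting red inside $V_1$ you would apply the inductive hypothesis to a $(k-1)$-coloring of $K_{|V_1|}$ with $|V_1|=n-n_2<n$, but the threshold is independent of $k$, so the hypothesis gives nothing. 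Fortunately the case is immediate without induction: if all edges between $V_1$ and $V_2$ are red, then every vertex of $V_1$ has red degree at least $n_2$ and every vertex of $V_2$ has red degree at least $n_1$, forcing $n_1,n_2\le m-1$ and hence $n\le 2m-2$, which contradicts $n\ge\frac{5m-6}{2}$ for all $m\ge 2$. Indeed no induction on $k$ is needed anywhere, since only the two Gallai colors ever appear on external edges and a star only requires one vertex of large monochromatic degree; replacing your inductive frame with this direct argument (and restricting to $k\ge 3$) yields a correct proof.
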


Our first result, provided in Section~\ref{Sec:StarLemma}, provides
an analysis of the stability of Theorem~\ref{Thm:K3-Star}.

For the union of two stars, Grossman \cite{Grossman} obtain the classical Ramsey number in the following result.
\begin{theorem}[\cite{Grossman}]\label{Thm:RamseyUnionStars}
Let $n,m$ be two integers with $n\geq m\geq 1$, and let
$K_{1,n},K_{1,m}$ be two stars. Then
$$
{\rm R}(K_{1,n}\cup K_{1,m})=\max\{n+2m, 2n+1,n+m+3\}.
$$
\end{theorem}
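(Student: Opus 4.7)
The plan is to match the value $\max\{n+2m,\ 2n+1,\ n+m+3\}$ by giving three separate lower bound constructions and then a single upper bound argument organised by cases on the maximum monochromatic degree. Because $K_{1,n}\cup K_{1,m}$ is disconnected, the arguments hinge on vertex-counting: every copy of this graph uses exactly $n+m+2$ distinct vertices, and the two stars must be vertex-disjoint, so both constructions and proofs reduce to checking how many vertices remain after placing one star.

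For the lower bound, three two-colourings of $K_{N-1}$ show that none of the three candidate values can be improved. For $R \geq 2n+1$, partition $K_{2n}$ into $V_1,V_2$ of size $n$, colour edges inside each part red and edges between the parts blue: the red graph is $2K_n$ with maximum degree $n-1$ (so no red $K_{1,n}$), while any blue $K_{1,n}$ uses one entire part as its leaves, leaving no vertex to serve as a leaf of a disjoint blue $K_{1,m}$. For $R \geq n+2m$, use the same pattern on $n+2m-1$ vertices with $|V_1|=n+m-1$ and $|V_2|=m$; a red $K_{1,n}$ is forced to lie inside $V_1$, after which the leftover $m-2$ vertices of $V_1$ and the max red degree $m-1$ inside $V_2$ preclude a disjoint red $K_{1,m}$, and a parallel audit of the blue $K_{n+m-1,m}$ blocks the blue analogue. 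For $R \geq n+m+3$, on $n+m+2$ vertices pick one vertex $v$ and colour all of its incident edges red and everything else blue; the red graph is the connected star $K_{1,n+m+1}$ which cannot contain two vertex-disjoint stars, and the blue graph is $K_{n+m+1}$ on the other vertices, one vertex short of the $n+m+2$ needed for $K_{1,n}\cup K_{1,m}$.

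For the upper bound, let $N=\max\{n+2m,\ 2n+1,\ n+m+3\}$ and consider any $2$-colouring of $K_N$. Pick a vertex $v$ of maximum monochromatic degree; by symmetry assume red dominates at $v$, with $d=d_R(v)\geq \lceil (N-1)/2\rceil$. If $d\geq n+m+1$, set $A=N_R(v)$. If some $u\in A$ has at least $m$ red neighbours in $A$, choose such a set $B'\subseteq N_R(u)\cap A$ with $|B'|=m$; then $(u,B')$ is a red $K_{1,m}$, and $v$ still has at least $|A|-(m+1)\geq n$ red neighbours in $A\setminus(\{u\}\cup B')$, giving the disjoint red $K_{1,n}$. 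Otherwise every $u\in A$ has at least $|A|-m$ blue neighbours inside $A$, and this blue density, combined with the vertices outside $A$ supplied by $N\geq n+2m$, lets us pick a blue $K_{1,n}$ centered at some $u\in A$ and then a disjoint blue $K_{1,m}$ centered at a further vertex of $A$. In the remaining regime every vertex has red and blue degree at most $n+m$, so $N-1\leq 2(n+m)$; the assumption $N\geq 2n+1$ forces every vertex to be the center of a monochromatic $K_{1,n}$ in at least one colour, and splitting the vertex set according to which colour dominates, together with $N\geq n+m+3$ providing the final vertex for a disjoint $K_{1,m}$, closes the argument.

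The main obstacle will be the blue-density subcase of the first case: extracting a blue $K_{1,n}\cup K_{1,m}$ from a blue-dense set $A$ whose size can be as small as $n+m+1$ requires a delicate count, and it is precisely here that the bound $N\geq n+2m$ provides the slack needed for a disjoint $K_{1,m}$ after placing the blue $K_{1,n}$. A secondary difficulty is the regime where no vertex is heavily biased in either colour; here all three defining inequalities play a role, and the argument must be handled separately in degenerate ranges such as $m=1$ or $n=m$ where several of the bounds in the maximum coincide.
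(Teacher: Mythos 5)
This statement is Theorem~\ref{Thm:RamseyUnionStars}, which the paper does not prove at all: it is quoted from Grossman \cite{Grossman}, so there is no in-paper proof to compare against. Judged on its own, your three lower-bound constructions are correct and complete: the two-part colouring with parts of size $n,n$ rules out $2n$, the colouring with parts of size $n+m-1$ and $m$ rules out $n+2m-1$, and the single red-dominating vertex rules out $n+m+2$.

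The upper bound, however, is only an outline, and it has genuine gaps at exactly the two places you flag; neither is closed by the ideas you name. In the blue-dense subcase of your first case, each $u\in A$ has at least $|A|-m$ blue neighbours inside $A$, but after giving $u_2$ its $m$ blue leaves the centre $u_1$ retains only $|A|-2m-1$ blue neighbours in $A$, which is $n-m$ when $|A|=n+m+1$: you are short by $m$, not by a bounded amount. The hypothesis $N\ge n+2m$ does not rescue this, because it bounds $N$ rather than $|A|=d_R(v)$, and the vertices outside $A$ are not obviously usable: $v$ sends only red edges into $A$, and the vertices of $V\setminus(A\cup\{v\})$ carry no guaranteed blue edges to $A$. (One can strengthen the hypothesis to ``every $u\in A$ has total red degree at most $m$'' by choosing $v$'s leaves to avoid $N_R(u)\cap A$, but the deficit of $m$ persists, since all of $u_2$'s blue leaves may lie inside $N_B(u_1)$.) Your final regime is not an argument at all: when every monochromatic degree is at most $n+m$, two vertices of red degree exactly $n$ may have nearly coincident red neighbourhoods, so that after placing the red $K_{1,n}$ at one of them the other retains only about $n-(m+1)$ usable red leaves; ``splitting the vertex set according to which colour dominates'' does not address this, and one needs either a careful choice of the first star's leaves (preferring non-neighbours of the second centre) or a counting argument over all candidate pairs, together with a separate treatment of which of the three terms achieves the maximum. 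In short, the proposal establishes the lower bound $\mathrm{R}(K_{1,n}\cup K_{1,m})\ge\max\{n+2m,2n+1,n+m+3\}$ but does not yet prove the matching upper bound.
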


In Section~\ref{Sec:Diff}, we prove the exact value of the Gallai-Ramsey number for the union of two stars $K_{1, n} \cup K_{1, m},$ where $m\leq \frac{n-8}{6}$.\\
\begin{theorem}\label{Thm:GallaiRamseySmallm}
Let $n\geq 22$, $m \geq 5$ and $k \geq 3$ be three integers with
$m\leq \frac{n-8}{6}$. Then
$$
{\rm gr}_{k}(K_{3}:K_{1,n}\cup K_{1,m})=\begin{cases}
\frac{5n-6}{2}+k-3 & \text{ if $n$ is even,}\\
\frac{5n-3}{2}+k-3 & \text{ if $n$ is odd.}
\end{cases}
$$
\end{theorem}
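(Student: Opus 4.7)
The plan is to establish matching upper and lower bounds by combining the classical star bound of Theorem~\ref{Thm:K3-Star} with the stability analysis developed in Section~\ref{Sec:StarLemma} and induction on $k$. Throughout, let $N_0$ denote the right-hand side of Theorem~\ref{Thm:K3-Star}, so that $N_0=(5n-6)/2$ for even $n$ and $N_0=(5n-3)/2$ for odd $n$.

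For the lower bound, I would start from an extremal Gallai $3$-coloring on $N_0-1$ vertices avoiding a monochromatic $K_{1,n}$, which exists because the bound in Theorem~\ref{Thm:K3-Star} is independent of the number of colors. For each $i\in\{4,\dots,k\}$ I would then adjoin a new vertex $u_i$ and color every edge from $u_i$ to the vertices already present by color $i$. No new rainbow triangle is created, because every triangle through $u_i$ contains two color-$i$ edges. Colors $1,2,3$ are unchanged and so still avoid $K_{1,n}$, while for each $i\geq 4$ every color-$i$ edge is incident to the single vertex $u_i$, so the color-$i$ subgraph is a star and cannot contain two vertex-disjoint stars. This yields a Gallai $k$-coloring on $N_0+k-4$ vertices with no monochromatic $K_{1,n}\cup K_{1,m}$, establishing ${\rm gr}_k(K_3:K_{1,n}\cup K_{1,m})\geq N_0+k-3$.

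For the upper bound I would induct on $k$. In the base case $k=3$, consider a Gallai $3$-coloring of $K_{N_0}$. Theorem~\ref{Thm:K3-Star} supplies a monochromatic $K_{1,n}$, say red, centered at $v$ with leaf set $L$. The remaining set $R := V\setminus(\{v\}\cup L)$ has size $(3n-8)/2$ (respectively $(3n-5)/2$), and the hypothesis $m\leq(n-8)/6$ is chosen so that $|R|\geq {\rm gr}_3(K_3:K_{1,m})$; applying Theorem~\ref{Thm:K3-Star} inside $R$ produces a monochromatic $K_{1,m}$. If its color is red we are done, so suppose it is blue or green. I would then invoke the stability lemma of Section~\ref{Sec:StarLemma}, which pins down the structure of Gallai colorings that are close to the extremal examples for $K_{1,n}$, in order to force enough additional monochromatic structure in some non-red color to produce the desired $K_{1,n}\cup K_{1,m}$.

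For the inductive step $k\geq 4$ on $N=N_0+k-3$ vertices, I would search for a color $c$ whose entire edge set is incident to a single vertex $v_c$, i.e.\ a \emph{degenerate} color. Deleting $v_c$ yields a Gallai $(k-1)$-coloring of $K_{N-1}$ with $N-1=N_0+(k-1)-3$, and by induction we find a monochromatic $K_{1,n}\cup K_{1,m}$, which persists in the original graph. If no color is degenerate, then every color class contains two vertex-disjoint edges; combining this abundance with the Gallai-partition structure from Theorem~\ref{Thm:G-Part} and the slack of $k-3$ extra vertices beyond $N_0$ should suffice to construct $K_{1,n}\cup K_{1,m}$ directly, by locating the $K_{1,n}$ in the reduced graph and harvesting the additional disjoint $K_{1,m}$ from one of the large parts.

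The principal obstacle is the base case $k=3$: once the red $K_{1,n}$ has been located, we must rule out that the leftover $3$-coloring on $R$ simultaneously avoids a red $K_{1,m}$ inside $R$, a blue $K_{1,n}\cup K_{1,m}$, and a green $K_{1,n}\cup K_{1,m}$. Settling this will require a delicate case analysis driven by the stability lemma, exploiting the interaction between $R$ and the $n+1$ vertices of the red $K_{1,n}$ within each color class to force the desired monochromatic structure; the quantitative hypothesis $m\leq(n-8)/6$ enters here to guarantee the various auxiliary Ramsey-type inequalities needed to close the argument.
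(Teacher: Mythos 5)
Your lower bound is correct and is essentially the paper's construction in disguise (the extremal $3$-coloring for $\mathrm{gr}_3(K_3:K_{1,n})$ is the blow-up of the triangle-free $2$-colored $K_5$, and appending one new vertex per extra color is exactly what the paper does). The upper bound, however, has a genuine gap: both of the hard steps are announced rather than carried out, and the specific plan you propose for the base case does not close. You first extract a red $K_{1,n}$ via Theorem~\ref{Thm:K3-Star} and then hope to find a disjoint $K_{1,m}$ of the \emph{same} color, falling back on Lemma~\ref{Lemma:StarStability} when the colors disagree. But the hypothesis of that lemma is that there is \emph{no} monochromatic $K_{1,n}$ on edges between parts of the Gallai partition; in the situation you are trying to handle, such a star typically does exist (that is how you found it), so the lemma is vacuous exactly where you need it. The genuinely hard configuration -- which the paper treats as Case~\ref{Case:k3t2} -- is a Gallai partition with two parts, one of which is a single vertex $v_1$ joined in red to everything else. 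There $v_1$ is the center of a red $K_{1,N-1}$, yet every other vertex may have at most $m$ incident red edges, so no disjoint red $K_{1,m}$ exists and one must instead manufacture a blue or green $K_{1,n}\cup K_{1,m}$. The paper does this by picking a vertex $v_2$ of minimum red degree, partitioning the rest into $S,T,U$ by the color of the edge to $v_2$, and running a counting argument (Claim~\ref{Claim:U} and the green-edge count between $T$ and $U$). Nothing in your outline supplies this argument, and "delicate case analysis driven by the stability lemma" will not produce it, since the lemma says nothing about two-part partitions.

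The inductive step has a similar problem. Your dichotomy is "some color is degenerate (all its edges meet one vertex) versus every color has two disjoint edges," and in the second branch you assert the conclusion "should" follow. That branch is precisely where the work lies, and mere existence of two disjoint edges in each color is far too weak to build a $K_{1,n}\cup K_{1,m}$. The paper instead extracts a maximal set $T$ of vertices each joined monochromatically to $G\setminus T$, proves $|T|\le k-2$ (Claim~\ref{Claim12}, which itself needs Theorem~\ref{Thm:K3-Star} and a partition argument), and then either re-runs the $k=3$ analysis on $G\setminus T$ or applies Lemma~\ref{Lemma:StarStability} with $r=8$ to pin down the five-part structure and derive a contradiction. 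Note that a vertex of $T$ need not witness a degenerate color (its color may also appear inside $G\setminus T$, just with bounded degree), so your dichotomy is not equivalent to the paper's and the reduction to $k-1$ colors does not go through as stated. As written, the proposal is a plausible research plan but not a proof: the base case $t=2$ analysis, the claim that all parts are large when $t\ge 4$ (Claim~\ref{Claim:AllBigger}), and the $|T|\le k-2$ bound are all missing, and each requires a substantive argument.
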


In Section~\ref{Sec:Equal}, we prove the exact value of the
Gallai-Ramsey number for the union of two equal stars $K_{1, n} \cup
K_{1, n}$.
\begin{theorem}\label{Thm:GallaiRamseyLargem}
For $k \geq 3$,
$$
{\rm gr}_{k}(K_{3} :K_{1,n}\cup K_{1,n}) = 3n+k-1.
$$
\end{theorem}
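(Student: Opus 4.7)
The plan is to prove the theorem via matching lower and upper bounds.

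\textbf{Lower bound.} Construct a Gallai $k$-coloring of $K_{3n+k-2}$ containing no monochromatic $K_{1,n}\cup K_{1,n}$. Fix a $2$-coloring $H$ of $K_{3n-1}$ in colors $1$ and $2$ with no monochromatic $K_{1,n}\cup K_{1,n}$ (which exists by Theorem~\ref{Thm:RamseyUnionStars}, since $R(K_{1,n}\cup K_{1,n})=3n$), chosen so that the color-$1$ subgraph has maximum degree at most $n-1$, for instance by letting color $1$ be a disjoint union of cliques of size at most $n$. Append $k-2$ new vertices $v_3,\ldots,v_k$, with $v_i$ joined by a fresh color $i$ to every previously placed vertex, and then one final vertex $u$ joined to $H$ in color $1$ and to each $v_i$ in color $i$, for a total of $3n+k-2$ vertices. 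A case check on triangle types confirms there is no rainbow triangle and no monochromatic $K_{1,n}\cup K_{1,n}$: each color $i\in\{3,\ldots,k\}$ is a star centered at $v_i$; color $2$ lives entirely inside $H$; and in color $1$ only $u$ is a large-degree center, while any potential second center inside $H$ has its color-$1$ neighborhood necessarily containing $u$ and thus cannot supply a vertex-disjoint second $K_{1,n}$.

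\textbf{Upper bound.} Proceed by induction on $k\geq 3$, with base case $k=3$. Let $G$ be a Gallai $k$-coloring of $K_N$ with $N=3n+k-1$, and suppose for contradiction that $G$ has no monochromatic $K_{1,n}\cup K_{1,n}$. Apply Theorem~\ref{Thm:G-Part} to obtain a Gallai partition $V(G)=V_1\cup\cdots\cup V_t$ whose reduced graph $R$ is a $2$-coloring of $K_t$ in colors $\alpha$ and $\beta$. If $t\geq 3n$, Theorem~\ref{Thm:RamseyUnionStars} gives a monochromatic $K_{1,n}\cup K_{1,n}$ in $R$, which lifts to $G$ by choosing one vertex per part. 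If $t<3n$, some part $V_i$ is comparatively large and fewer than $k$ colors may appear inside it; one either applies the inductive hypothesis (to the restriction to $V_i$, where the color count drops by at least one since $\alpha$ or $\beta$ is used between parts rather than inside $V_i$) or applies Theorem~\ref{Thm:K3-Star} internally to $V_i$ to extract a monochromatic $K_{1,n}$, whose disjoint companion $K_{1,n}$ is assembled from the other parts using the reduced-graph structure.

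\textbf{Main obstacle.} The delicate case is the intermediate regime in which $t$ is only slightly below $3n$ while no part is particularly large, so neither the reduced-graph argument nor the within-a-part induction succeeds by itself. In this regime the argument must combine a near-miss monochromatic $K_{1,n}\cup K_{1,n}$ in the reduced graph with local star structure inside parts; the stability lemma from Section~\ref{Sec:StarLemma} is used to control color-degrees near the extremal bound of Theorem~\ref{Thm:K3-Star}, so that a monochromatic $K_{1,n}$ found inside one part can be paired with a vertex-disjoint monochromatic $K_{1,n}$ supplied either by another part or by the reduced graph, yielding the contradiction.
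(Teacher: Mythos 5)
Both halves of your argument have genuine gaps.

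\textbf{Lower bound.} The base coloring $H$ you require does not exist. Suppose a $2$-coloring of $K_{3n-1}$ has color-$1$ maximum degree at most $n-1$; then every vertex has color-$2$ degree at least $2n-1$, so for any pair $u\neq v$ one can greedily build a color-$2$ copy of $K_{1,n}\cup K_{1,n}$ centered at $u$ and $v$ unless $|(N_2(u)\cup N_2(v))\setminus\{u,v\}|\leq 2n-1$. Passing to complements, this is equivalent (in both the case $uv$ color $1$ and the case $uv$ color $2$) to $|N_1(u)\cap N_1(v)|\geq n-2$ for \emph{every} pair $u,v$. Summing over pairs yields $\sum_x\binom{d_1(x)}{2}\geq\binom{3n-1}{2}(n-2)$, whereas $d_1(x)\leq n-1$ forces $\sum_x\binom{d_1(x)}{2}\leq(3n-1)\binom{n-1}{2}$; dividing by $n-2$ these demand $n-1\geq 3n-2$, impossible for $n\geq 3$. (Your concrete suggestion, color $1$ a disjoint union of cliques of size at most $n$, already fails: color $2$ is then complete multipartite with at least three parts and visibly contains two disjoint $K_{1,n}$'s.) The paper avoids this trap by using \emph{three} colors on the first $3n-1$ vertices (the graph $F_2$ in Lemma~\ref{Lemma:EqualLower}: a $K_{2n-1}$ split into two $(n-1)$-regular graphs in colors $1$ and $2$, joined in color $3$ to a color-$1$ $K_n$) before stacking the dominating vertices $v,w$ in colors $1$ and $2$; a two-colored base of order $3n-1$ with one color of maximum degree below $n$ is not available.

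\textbf{Upper bound.} The case analysis does not close. When $t<3n$, the largest part is only guaranteed to have order greater than $N/t$, i.e., at least $2$, which is not ``comparatively large'' in any usable sense: induction on $k$ inside a part would need that part to contain $3n+k-2$ vertices (essentially all of $G$), and Theorem~\ref{Thm:K3-Star} applied inside a part needs order roughly $\frac{5n}{2}$. So for $4\leq t<3n$ with all parts of moderate size, neither tool applies, and your final paragraph acknowledges rather than resolves exactly this regime. Note also that Lemma~\ref{Lemma:StarStability} is stated for colorings of order $\frac{5n-r}{2}<3n$ with no $K_{1,n}$ between parts, so it cannot be invoked on $G$ itself here. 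The paper's actual route is different: it first extracts a set $T$ of at most $k-2$ vertices monochromatically joined to the rest (disposing of all colors beyond red and blue on the partition), reduces to a Gallai partition of $G\setminus T$ of order at least $3n+1$, and then performs a bespoke analysis of the red/blue neighborhood sets $A_i,B_i$ of the parts (Claims~\ref{ClaimnA}--\ref{Claimn21}), split according to whether some part has more than $n$ vertices. Some argument of this kind is indispensable; the sketch as written does not supply it.
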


Finally, in Section~\ref{Sec:General}, we get the upper and lower bounds for the union of
two stars for general $n,m$.
\begin{theorem}\label{Thm:GallaiRamseyGeneralCase}
Let $n\geq 9$, $n>m \geq 2$ and $k \geq 3$ be three integers with
$m\geq \frac{n-2}{6}$. Then
$$
\begin{cases}
\max\left\{2n+m+k-5,\frac{5n-6}{2}+k-3\right\}\leq {\rm gr}_{k}(K_{3}:K_{1,n}\cup K_{1,m})\leq 3n+3m+k-3 & \text{ if $n$ is even,}\\
\max\left\{2n+m+k-4,\frac{5n-3}{2}+k-3\right\}\leq {\rm
gr}_{k}(K_{3}:K_{1,n}\cup K_{1,m})\leq 3n+3m+k-2 & \text{ if $n$ is
odd.}
\end{cases}
$$
\end{theorem}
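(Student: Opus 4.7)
I will give two separate constructions that respectively realize the two terms in $\max\{2n+m+k-5,\frac{5n-6}{2}+k-3\}$ (and their odd-$n$ analogues). The first invokes Theorem~\ref{Thm:K3-Star} as a black box: take a $3$-colored Gallai coloring of $K_{N_0}$ on $N_0=\frac{5n-6}{2}-1$ or $\frac{5n-3}{2}-1$ vertices in which no monochromatic $K_{1,n}$ appears, and append $k-3$ pendants $v_4,\dots,v_k$, coloring the edges from $v_j$ to all earlier vertices with color~$j$ and the edge $v_iv_j$ with color $\min(i,j)$. No triangle through any $v_j$ is rainbow (two of its edges share the color $\min$), each new color class is a single star, and a single star cannot host two vertex-disjoint stars. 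This produces the bound $\frac{5n-6}{2}+k-3$ or $\frac{5n-3}{2}+k-3$. For the second construction, I design a $3$-colored Gallai configuration on $2n+m-3$ (even $n$) or $2n+m-2$ (odd $n$) vertices with no monochromatic $K_{1,n}\cup K_{1,m}$, then perform the same pendant extension. The base is obtained by partitioning into cliques whose sizes and bipartite color assignments are tuned so that, in every color, removing the vertices of any candidate $K_{1,n}$ leaves maximum degree strictly below $m$; the exact sizes depend on the relative magnitude of $n$ and $m$.

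\textbf{Upper bound.} I induct on $k\ge 2$. The base case $k=2$ is Theorem~\ref{Thm:RamseyUnionStars}, whose value $\max\{n+2m,2n+1,n+m+3\}$ is strictly below $3n+3m-1$ for $n\ge 9$ and $m\ge 2$. For the inductive step, take a Gallai $k$-coloring of $K_N$ with $N=3n+3m+k-3$ or $3n+3m+k-2$ and apply Theorem~\ref{Thm:G-Part} to obtain a Gallai partition with $2$-colored reduced graph $R$ on $t$ parts. If $t\ge R(K_{1,n}\cup K_{1,m})$, Theorem~\ref{Thm:RamseyUnionStars} places a monochromatic $K_{1,n}\cup K_{1,m}$ inside $R$; since its two stars occupy disjoint reduced vertices, any choice of one representative from each used part lifts them to disjoint monochromatic stars in $K_N$. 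Otherwise the largest part $P$ satisfies $|P|\ge\lceil N/t\rceil$, and I will show that either at most $k-1$ colors appear strictly inside $P$ (so the induction hypothesis applies after adjusting the additive $+k$ constant), or an internal monochromatic $K_{1,n}$ in $P$ can be combined with a disjoint $K_{1,m}$ assembled from the reduced edges between $P$ and another part of the same color.

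\textbf{Main obstacle.} The delicate regime is the intermediate one in the induction, where $t$ is small but not trivial: one must track which of the two reduced colors accounts for most external degree at each part, and ensure that an internal star in one color does not collide on the shared boundary with an external star of the same color when the two are glued into $K_{1,n}\cup K_{1,m}$. The second lower bound construction is also fiddly, since Grossman's extremal family from Theorem~\ref{Thm:RamseyUnionStars} has different structure depending on which of $n+2m$, $2n+1$, $n+m+3$ is the active term of the maximum; each variant must be extended to $3$ colors without creating a new pair of disjoint monochromatic stars in the third color.
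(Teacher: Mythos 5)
Your proposal has genuine gaps in both directions. On the lower bound, your first construction (the extremal coloring for Theorem~\ref{Thm:K3-Star} plus $k-3$ pendant vertices, each new color class a star) is fine and matches the paper's Lemma~\ref{Lemma:Small_m_Large_n_Lower}. But the second construction, which must realize the term $2n+m+k-5$ (resp.\ $2n+m+k-4$), is never actually given: ``partitioning into cliques whose sizes and bipartite color assignments are tuned so that \dots the exact sizes depend on the relative magnitude of $n$ and $m$'' is a description of what a construction should achieve, not a construction. The paper's Lemma~\ref{Lemma:GeneralLower} supplies a concrete one: four cliques of order $\frac{n-1}{2}$ (adjusted for parity) and one clique of order $m$, all internally in color $1$, blown up into the unique $2$-colored $K_5$ with no monochromatic triangle, then extended by pendant vertices; this has order $2n+m-2+(k-3)$ for odd $n$. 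You would need to exhibit and verify something of this kind.

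On the upper bound, the induction on $k$ does not close as set up. When $t<R(K_{1,n}\cup K_{1,m})$, your only leverage is $|P|\ge\lceil N/t\rceil$; since $R(K_{1,n}\cup K_{1,m})=\max\{n+2m,2n+1,n+m+3\}$ can be as large as roughly $2n+2m$ while $N\approx 3n+3m$, this gives $|P|\ge 2$, which supports neither an application of the induction hypothesis inside $P$ nor the extraction of an internal $K_{1,n}$. Likewise, ``at most $k-1$ colors appear strictly inside $P$'' would only help if $|P|$ exceeded the $(k-1)$-color threshold, which is not guaranteed. The paper avoids induction on $k$ entirely: it removes a set $T$ of at most $k-2$ vertices each joined monochromatically to the rest (so $|G'|\ge 3n+3m$), shows via the $t=2$ argument that every part of the Gallai partition of $G'$ has order at most $n+m-1$, uses Grossman's Ramsey number only to guarantee some part $H_1$ has order at least $2$ (otherwise $G'$ is $2$-colored of order exceeding $R(K_{1,n}\cup K_{1,m})$), and then observes that one of the two color-neighborhoods of $H_1$ has order at least $|G'|/3\ge n+m$, so two vertices of $H_1$ are the centers of a monochromatic $K_{1,n}\cup K_{1,m}$. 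Some short argument of this kind is needed to replace your ``otherwise'' branch.
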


\section{Star lemma}\label{Sec:StarLemma}

First a helpful lemma regarding the stability of
Theorem~\ref{Thm:GallaiRamseySmallm}.

\begin{lemma}\label{Lemma:StarStability}
For positive integers $n$ and $r$ with $n \geq 22$ and $4 \leq r
\leq \frac{n + 4}{4}$, if $G$ is a Gallai coloring of a complete
graph of order $\frac{5n - r}{2}$ which contains no monochromatic
copy of $K_{1, n}$ on edges between parts of a Gallai partition,
then there are exactly $5$ parts in any Gallai partition, each of
order at least $\frac{n - r + 3}{2}$, and every vertex has at least
$n - r + 3$ incident edges to other parts of the Gallai partition in
each of two colors (the two colors appearing between parts of the
Gallai partition).
\end{lemma}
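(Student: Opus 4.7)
The plan is to fix an arbitrary Gallai partition of our $K_N$ (with $N = \tfrac{5n-r}{2}$), producing parts $V_1, \dots, V_t$ of sizes $a_i = |V_i|$ and, by Theorem~\ref{Thm:G-Part}, a $2$-coloured reduced graph $R$ on $t$ vertices with colours red and blue. The linchpin is a single universal lower bound on the part sizes. For any $v \in V_i$, the $N - a_i$ between-part edges at $v$ split into two monochromatic stars, each of size at most $n-1$ by hypothesis, so
\[
N - a_i \le 2(n-1), \quad \text{hence} \quad a_i \ge N - 2n + 2 = \tfrac{n - r + 4}{2} \ge \tfrac{n - r + 3}{2}.
\]
(The degenerate case in which $R$ uses only one colour is eliminated by the same inequality applied with $N - a_i \le n-1$.)

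I would then bootstrap this bound to control the structure of $R$: if some $V_i$ had three or more red neighbours in $R$, the red between-part degree of any $v \in V_i$ would be at least $3 \cdot \tfrac{n-r+4}{2}$, which exceeds $n-1$ since $n \ge 22$ and $r \le \tfrac{n+4}{4}$ (equivalently, $r < \tfrac{n+14}{3}$). By symmetry every vertex of $R$ has red-degree and blue-degree at most $2$, forcing $t \le 5$. The cases $t \in \{2, 3, 4\}$ are all eliminated by producing the contradictory bound $N \le 2(n-1)$, which violates $N = \tfrac{5n-r}{2} > 2n - 2$. For $t = 2$ this is immediate from $a_1, a_2 \le n-1$. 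For $t = 3$ the majority colour of $R = K_3$ centres at some $V_i$ with $a_i \ge N - n + 1$, while any neighbour of $V_i$ forces $a_i \le n-1$. For $t = 4$ the degree restriction reduces us, up to swapping colours, to red $\in \{P_4, C_4\}$; in each subcase, two appropriately chosen vertices of $R$ yield $a_1 + a_3 \le n-1$ and $a_2 + a_4 \le n-1$, summing to $N \le 2n - 2$.

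With $t = 5$ established, every vertex of $R$ now has red-degree and blue-degree exactly $2$, so each colour class is a $2$-regular spanning subgraph of $K_5$ and therefore a pentagon, recovering the classical $C_5 \cup C_5$ decomposition of $K_5$. For the colour-degree conclusion, if $V_{j_1}$ and $V_{j_2}$ are the two red $R$-neighbours of $V_i$, then any $v \in V_i$ has red between-part degree $a_{j_1} + a_{j_2} \ge 2 \cdot \tfrac{n-r+4}{2} = n - r + 4 \ge n - r + 3$, and the blue case is identical. The only step requiring any genuine work is the $t = 4$ analysis; the monochromatic-degree restriction on $R$ keeps the case split short, and everything else flows from the initial size bound.
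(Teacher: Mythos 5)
Your proof is correct and follows essentially the same route as the paper's: a pigeonhole lower bound on the part sizes, a monochromatic-degree-at-most-$2$ bound in the reduced graph forcing $t \le 5$, and the two complementary pentagons when $t = 5$. The one worthwhile difference is that you dispose of $t \in \{2,3,4\}$ by direct size-counting in an arbitrary Gallai partition, whereas the paper invokes minimality of the number of parts; your version is better matched to the lemma's claim about \emph{any} Gallai partition.
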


\begin{proof}
Let $G$ be a Gallai coloring of a complete graph of order $\frac{5n
- r}{2}$ and suppose that $G$ contains no monochromatic copy of
$K_{1, n}$. By Theorem~\ref{Thm:G-Part}, there is a Gallai partition
of $G$, say using red and blue on edges between parts of the
partition. Choose such a partition $H_1,H_2,\ldots,H_t$ with the
smallest number of parts $t$. In order to avoid a vertex of degree
$n$ in red or blue, certainly no part of the Gallai partition can
have order at least $n$. On the other hand, if we choose a vertex
$v$ in a part of order at most $\frac{n - r + 2}{2}$, then there are
at least
$$
\ceil{ \frac{|G| - \frac{n - r + 2}{2}}{2} } = \ceil{ \frac{ \frac{5n - r}{2} - \frac{n - r + 2}{2}}{2} } \geq n
$$
edges of one color incident to $v$, which means that every part of
the Gallai partition has order at least $\frac{n - r + 3}{2}$.

If $2 \leq t \leq 3$, then by minimality of $t$, we may assume that
$t = 2,$ but then there is a part of the partition of order at least
$\frac{|G|}{2} > n$, a contradiction. We may therefore assume that
$t \geq 4$. Additionally, since each part has order at least
$\frac{n - r + 3}{2}$ and $r \leq \frac{n + 4}{4}$, if any vertex
has edges of all one color to $3$ different parts, then there would
be a monochromatic copy of $K_{1, n}$. Thus, to avoid a
monochromatic copy of $K_{1, 3}$ in the reduced graph, we have $t
\leq 5$.

If $t = 4$, then there exists a ``big'' part of order at least
$\frac{|G|}{4} = \frac{5n - r}{8}$. By the minimality of $t$, the
four parts must have the structure that there is a red path
$H_3H_2H_1H_4$ and a blue path $H_1H_3H_4H_2$ in the reduced graph.
Since $|H_2|+|H_3|\geq n$ or $|H_1|+|H_4|\geq n$, it follows that
the induced subgraph by the edges from $H_2\cup H_3$ to $H_1\cup
H_4$ contains a blue star $K_{1,n}$, a contradiction.

Finally assume $t =5$. Then in order to avoid a monochromatic copy
of $K_{1, 3}$ in the reduced graph, the reduced graph must be the
unique $2$-coloring of $K_{5}$ containing no monochromatic triangle.
Thus, every vertex of $G$ has edges to exactly $2$ parts of the
Gallai partition in red and $2$ parts of the partition in blue. This
means that every vertex has red degree and blue degree at least
$2\cdot \frac{n - r + 3}{2} = n - r + 3$.
\end{proof}


\section{For small $m$ and large $n$}\label{Sec:Diff}

In this section we give a proof for Theorem \ref{Thm:GallaiRamseySmallm}.

For small $m$ and large $n$, we first give the lower bound on the
Gallai-Ramsey number for $K_{1,n}\cup K_{1,m}$.

\begin{lemma}\label{Lemma:Small_m_Large_n_Lower}
Let $n\geq 22$, $m \geq 5$ and $k \geq 3$ be three integers. Then
$$
{\rm gr}_{k}(K_{3} :K_{1,n}\cup K_{1,m}) \geq \begin{cases}
\frac{5n-6}{2}+k-3 & \text{ if $n$ is even,}\\
\frac{5n-3}{2}+k-3 & \text{ if $n$ is odd.}
\end{cases}
$$
\end{lemma}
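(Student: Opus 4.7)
The plan is to construct, for each parity of $n$, an explicit Gallai $k$-coloring of a complete graph whose order is one less than the claimed Gallai-Ramsey number and which contains no monochromatic copy of $K_{1,n}\cup K_{1,m}$. Since the claimed lower bound exceeds the value of ${\rm gr}_k(K_3:K_{1,n})$ given by Theorem~\ref{Thm:K3-Star} by exactly $k-3$, the natural idea is to take the extremal construction avoiding $K_{1,n}$ and pad it with $k-3$ extra vertices, each joined to the earlier vertices by a fresh colour. A monochromatic $K_{1,n}$ will then be allowed, but no second, vertex-disjoint star $K_{1,m}$ of the same colour will.

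First, I would set up the base graph $G_0$ by starting from the unique $2$-coloring of $K_5$ with no monochromatic triangle (a red pentagon together with a blue pentagram) and blowing up its vertices into independent sets $H_1,\dots,H_5$, with every edge inside a part assigned a third colour, say green. Choose the part sizes so that $|G_0|=\frac{5n-3}{2}-1$ when $n$ is odd (five parts of size $\frac{n-1}{2}$) or $|G_0|=\frac{5n-6}{2}-1$ when $n$ is even (four parts of size $\frac{n-2}{2}$ and one part of size $\frac{n}{2}$). A direct degree count shows the maximum monochromatic degree in $G_0$ is at most $n-1$, so $G_0$ contains no monochromatic $K_{1,n}$; and the between-part pattern together with a monochromatic interior of each part precludes any rainbow triangle.

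Next, introduce new vertices $v_4,v_5,\dots,v_k$ one at a time, colouring every edge from $v_j$ to the graph already built with the new colour $j$. The resulting graph $G$ has
\[
|G|=|G_0|+(k-3)=
\begin{cases}
\frac{5n-6}{2}+k-4 & \text{if $n$ is even},\\
\frac{5n-3}{2}+k-4 & \text{if $n$ is odd},
\end{cases}
\]
which is exactly one less than the claimed bound. No rainbow triangle arises: each triangle either lies in $G_0$ (handled above), or contains at least one $v_j$, in which case the two edges of the triangle incident to the $v_j$ of largest index in the triangle both have colour $j$.

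Finally, verify that $G$ contains no monochromatic $K_{1,n}\cup K_{1,m}$. For each of the three original colours, every edge of that colour lies in $G_0$, which has no monochromatic $K_{1,n}$ at all. For each new colour $j\in\{4,\dots,k\}$, every colour-$j$ edge is incident to $v_j$, so any two such edges meet at $v_j$ and the colour-$j$ subgraph cannot contain two vertex-disjoint stars. The main obstacle is really just the parity bookkeeping for the part sizes of $G_0$ in the even case and the verification that the monochromatic degree inside $G_0$ never reaches $n$; once these are in place, the lower bound follows.
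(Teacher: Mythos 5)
Your construction is essentially identical to the paper's: the same blow-up of the unique triangle-free $2$-coloring of $K_5$ with the same part sizes ($\frac{n-1}{2}$ five times for odd $n$; $\frac{n}{2}$ once and $\frac{n-2}{2}$ four times for even $n$), monochromatic cliques inside the parts, and $k-3$ appended vertices each carrying a fresh colour, with the same degree count and the same observation that each new colour class is a single star and so cannot contain two disjoint stars. The argument is correct and matches the paper's proof of Lemma~\ref{Lemma:Small_m_Large_n_Lower} up to a relabelling of the colours.
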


\begin{proof}
We prove this result by inductively constructing a coloring of $K_{N}$ where
$$
N = \begin{cases}
\frac{5n-8}{2}+k-3 & \text{ if $n$ is even,}\\
\frac{5n-5}{2}+k-3 & \text{ if $n$ is odd,}
\end{cases}
$$
which contains no rainbow triangle and no monochromatic copy of
$K_{1,n}\cup K_{1,m}$.

For odd $n$, we construct $G_{3}^{o}$ by making five copies of
$K_{\frac{n-1}{2}}$ each colored entirely with color $1$, and then
inserting edges of colors $2$ and $3$ between the copies to form a
blow-up of the unique $2$-colored $K_{5}$ which contains no
monochromatic triangle. For even $n$, we construct $G_{3}^{e}$ by
making one copy of $K_{\frac{n}{2}}$ and four copies of
$K_{\frac{n-2}{2}}$ each colored entirely with color $1$, and then
inserting edges of colors $2$ and $3$ between the copies to form a
blow-up of the unique $2$-colored $K_{5}$ which contains no
monochromatic triangle. This coloring clearly contains no rainbow
triangle and, since no vertex has at least $n$ incident edges in any
one color, there can be no monochromatic copy of $K_{1,n}\cup
K_{1,m}$.

To this base graph, for each $i$ with $4 \leq i \leq k$ in sequence, we add a vertex $v_i$ with all edges to the new vertex $v_i$ having color $i$. The resulting colored complete graph has order $N$ and contains no rainbow triangle or monochromatic $K_{1,n}\cup K_{1,m}$, completing the construction.
\end{proof}

\begin{proposition}\label{Prop:Small_m_Large_n_Upper}
Let $n\geq 22$, $m \geq 5$ and $k \geq 3$ be three integers with
$m\leq \frac{n-8}{6}$. Then
$$
{\rm gr}_{k}(K_{3}:K_{1,n}\cup K_{1,m})=\begin{cases}
\frac{5n-6}{2}+k-3 & \text{ if $n$ is even,}\\
\frac{5n-3}{2}+k-3 & \text{ if $n$ is odd.}
\end{cases}
$$
\end{proposition}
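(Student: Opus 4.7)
My plan is to combine the lower bound from Lemma~\ref{Lemma:Small_m_Large_n_Lower} with a matching upper bound, which I prove by induction on $k$, with the base case $k = 3$ handled by structural analysis and the inductive step reducing the number of colors by one via vertex deletion.

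For the base case $k = 3$, take a Gallai $3$-coloring $G$ of $K_{N}$ with $N = \frac{5n-6}{2}$ (even $n$) or $N = \frac{5n-3}{2}$ (odd $n$). Since $N = \mathrm{gr}_{3}(K_{3}:K_{1,n})$ by Theorem~\ref{Thm:K3-Star}, a monochromatic $K_{1,n}$ already exists; call it red with center $v$ and leaf set $L$. The task is then to locate a red $K_{1,m}$ inside $G' := G \setminus (\{v\}\cup L)$, which has order $\frac{3n-8}{2}$ or $\frac{3n-5}{2}$. I apply Lemma~\ref{Lemma:StarStability} directly to $G$ for even $n$ (with $r = 6$); for odd $n$ I first delete a single auxiliary vertex so that the lemma applies with $r = 5$. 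In both settings the lemma yields a $5$-part Gallai partition in which each part has order at least $\frac{n-3}{2}$ and every vertex has between-part degree at least $n - r + 3$ in each of the two between-part colors. This nearly saturates the vertex degrees in those two colors, so each vertex can have only a bounded in-part red and blue degree, and the remaining in-part edges concentrate in the third color. By tracking how $v$ and $L$ sit across the five parts, I expect the red-degree surplus to leave some vertex of $G'$ with at least $m$ red neighbors inside $G'$, producing the required $K_{1,m}$.

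For the inductive step $k \geq 4$, my goal is to find a vertex $v$ such that all edges of some single color are incident to $v$; then $G - v$ is a Gallai $(k-1)$-coloring of $K_{N-1}$ with $N - 1 = \frac{5n-6}{2}+(k-1)-3$, and the inductive hypothesis immediately yields a monochromatic $K_{1,n}\cup K_{1,m}$ that lives inside $G$. To locate such a $v$, I inspect the top-level Gallai partition: at most two colors appear between parts, so for $k \geq 4$ there are at least two ``extra'' colors used only inside parts. The extremal coloring from Lemma~\ref{Lemma:Small_m_Large_n_Lower} attaches each extra color as a single pendant; the plan is to show that any deviation from this star structure either exhibits such a pendant vertex elsewhere or supplies enough additional edges to build $K_{1,n} \cup K_{1,m}$ directly, both outcomes providing the desired reduction or conclusion.

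The main obstacle is the base case. The $5$-part structure supplied by Lemma~\ref{Lemma:StarStability} allows the monochromatic $K_{1,n}$ to arise essentially everywhere, so one must argue that a particular choice of $K_{1,n}$ leaves behind a vertex of residual red degree at least $m$ after the $n+1$ vertices are removed. The hypothesis $m \leq \frac{n-8}{6}$ is the arithmetic lever for this argument, and I expect the worst case to arise when four parts attain their minimum size and the fifth absorbs the leftover vertices; handling this case will likely require a free choice among several candidate red $K_{1,n}$'s, rerouting leaves through different parts so that the residual red-degree budget is concentrated on a single surviving vertex.
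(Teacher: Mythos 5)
Your skeleton (lower bound from Lemma~\ref{Lemma:Small_m_Large_n_Lower}, induction on $k$, Gallai partition plus Lemma~\ref{Lemma:StarStability} for structure) matches the paper's, but both halves of your upper-bound argument have genuine gaps. The base case is the serious one. Lemma~\ref{Lemma:StarStability} only applies when \emph{no} vertex has $n$ edges of one colour to other parts of the Gallai partition, yet your plan begins by invoking Theorem~\ref{Thm:K3-Star} to produce a monochromatic $K_{1,n}$. Since no part can have order $n+1$, that star necessarily lives in one of the two between-part colours and (in the hard configurations) uses at least $n$ between-part edges, so the five-part structure you rely on is simply unavailable exactly where you need it. Concretely, consider the paper's Case~\ref{Case:k3t2} with $t=2$ and $|H_2|=\{v_1\}$: the only red $K_{1,n}$ may be centred at $v_1$, every other vertex may have at most $m$ incident red edges (at most $m-1$ inside $H_1$), so no disjoint red $K_{1,m}$ exists and your ``find the $K_{1,n}$ first, then a same-colour $K_{1,m}$ in the residue'' strategy fails outright; the monochromatic $K_{1,n}\cup K_{1,m}$ must instead be found in blue or in the third colour, which is what the paper's $S,T,U$ decomposition around a minimum-red-degree vertex accomplishes. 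The same problem recurs for $t\geq 4$ with several big parts (the paper's Subcases with $r=4,5$), where the union of two stars is assembled directly from the reduced-graph structure rather than from a pre-chosen $K_{1,n}$. Your proposal contains no mechanism for switching colours, and that is the bulk of the actual proof.

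The inductive step also has a gap: a colour $i\geq 4$ need not have all its edges incident to a single vertex (it can be spread inside a part of the top-level partition), so the colour-killing vertex $v$ you want to delete may not exist, and ``any deviation supplies a pendant or enough edges'' is asserted rather than argued. The paper avoids this by taking a maximal set $T$ of vertices each monochromatic to $G\setminus T$ (with $|G\setminus T|\geq n+m$), proving $|T|\leq k-2$ via Theorem~\ref{Thm:K3-Star}, and then either re-running the $k=3$ analysis on $G\setminus T$ or, when $|T|=k-2$, showing no vertex of $G'$ has between-part degree $n$ in red or blue so that Lemma~\ref{Lemma:StarStability} (with $r=8$) finally applies. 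You would need an argument of this type, not vertex-by-vertex colour deletion.
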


\begin{proof}
Let $G$ be a Gallai-coloring of $K_{N}$ where
$$
N=\begin{cases}
\frac{5n-6}{2}+k-3 & \text{ if $n$ is even,}\\
\frac{5n-3}{2}+k-3 & \text{ if $n$ is odd.}
\end{cases}
$$
We only give the proof for the case that $n$ is odd, and the proof
of the case that $n$ is even can be proved similarly.

Since $G$ is a Gallai-coloring, by Theorem~\ref{Thm:G-Part}, there
is a Gallai-partition of $G$. Suppose red and blue are the two
colors appearing in the partition. Let $t$ be the number of parts in
this partition and choose such a partition where $t$ is minimized.

We consider the case when $n$ is odd since the case where $n$ is
even is similar. Let $r$ be the number of parts of the
Gallai-partition with order at least $\frac{n-1}{2}$, say with
$|H_{1}| \geq |H_{2}| \geq \dots \geq |H_{r}| \geq \frac{n-1}{2}$
and $|H_{r+1}|, |H_{r+2}|, \dots, |H_{t}|\leq \frac{n-3}{2}$.

The overall structure of the proof is by induction on $k$. We
consider cases based on the values of $k$ and $t$. For the base of
the induction, we first suppose that $k=3$ and further break into
two cases based on the value of $t$.

\setcounter{case}{0}
\begin{case}\label{Case:k3t2}
$k = 3$ and $2 \leq t \leq 3$.
\end{case}

Since $2\leq t\leq 3$, by the minimality of $t$, we may assume
$t=2$. Let $H_1$ and $H_2$ be the corresponding parts. Suppose all
edges from $H_1$ to $H_2$ are red. If $|H_1|\geq 2$ and $|H_2|\geq
2$, then there is a red $K_{1,n}\cup K_{1,m}$ since $|H_1|+|H_2| = N
= \frac{5n-3}{2}$ and so $|H_1|\geq
\frac{5n-3}{4}=n+\frac{n-3}{4}\geq n+m$, a contradiction since we
can build a red copy of $K_{1, n} \cup K_{1, m}$ with both stars
centered in $H_{2}$.

Suppose then that $|H_2|=1$. Then $|H_1|=\frac{5n-5}{2}$. Recall
that red is the color appearing in the edges from $H_1$ to $H_2$.
Let $H_2=\{v_1\}$.

If there is a vertex $w \in H_{1}$ with $m + 1$ incident red edges,
then there is a red copy of $K_{1, n} \cup K_{1, m}$ centered at $w$
and $v_{1}$. We may therefore assume that every vertex in $H_{1}$
has at most $m$ incident red edges. Choose a vertex $v_{2} \in
H_{1}$ with the smallest number of incident red edges. Let $S
\subseteq H_{1}$ be the set of vertices with red edges to $v_{2}$,
$T \subseteq H_{1}$ be the set of vertices with blue edges to
$v_{2}$, and $U \subseteq H_{1}$ be the set of vertices with green
edges to $v_{2}$. Then we have assumed $|S| \leq m - 1$ and suppose
$|T| \geq |U|$.

\begin{claim}\label{Claim:U}
$|U| \leq |T| \leq n + m$.
\end{claim}
\begin{proof}
Assume, to the contrary, that $|U| \geq n + m + 1$. If there is a
vertex $w \in S \cup T \cup U$ with a total of at least $m + 1$
incident blue edges, then there is a blue copy of $K_{1, n} \cup
K_{1, m}$ centered at $w$ and $v_{2}$, so every vertex in $S \cup T
\cup U$ has at most $m$ incident blue edges. This means that every
vertex in $S \cup T \cup U$ has at least
$$
|G| - 1 - 2m \geq \frac{5n - 3}{2} - 1 - 2m \geq n + m + 1
$$
incident green edges. This means that any two vertices in $S \cup T
\cup U$ form the centers of a green copy of $K_{1, n} \cup K_{1,
m}$, a contradiction.
\end{proof}

From Claim \ref{Claim:U}, we have $|U| \leq |T| \leq n + m$. Since
$|S| + |T| + |U| = N - 2$, we have $|T| \geq \frac{N - 2 - (m -
1)}{2} \geq n + 1$ and $|U| \geq N - n - 2m - 1$. If a vertex $w \in
T$ has at least $m$ blue edges to $U$, then there is a blue copy of
$K_{1, n} \cup K_{1, m}$ with centers $v_{2}$ and $w$. Thus, each
vertex $w \in T$ has at most $m - 1$ blue edges to $U$ and of course
$w$ has at most $m - 1$ red edges to $U$. This means that $w$ has at
least $|U| - 2(m - 1) \geq N - 4m - n + 1$ green edges to $U$, so
there are at total of at least $|T|(N - 4m - n + 1)$ green edges
between $T$ and $U$. This means there is a vertex $w \in U$ with at
least $\frac{|T|}{|U|}(N - 4m - n + 1) \geq N - 4m - n + 1$ green
edges to $T$. Within $U$, $w$ has at least $|U| - 1 - 2(m - 1)$
incident green edges so $w$ has a total of at least
$$
[N - 4m - n + 1] + [|U| - 2m + 1] \geq m + n + 1
$$
incident green edges. Then $w$ and $v_{2}$ form the centers of a green copy of $K_{1, n} \cup K_{1, m}$.

\begin{case}\label{Case:k3t4}
$k = 3$ and $t\geq 4$.
\end{case}

First a claim about the orders of parts in the Gallai partition.

\begin{claim}\label{Claim:AllBigger}
Every part has order at least $\frac{n - 5}{2}$.
\end{claim}

\begin{proof}
First suppose there is a part $H_{i}$ with $2 \leq |H_{i}| \leq
\frac{n - 4m - 6}{2}$. Let $A$ be the set of vertices in $G
\setminus H_{i}$ with red edges to $H_{i}$ and $B$ be the set of
vertices in $G \setminus H_{i}$ with blue edges to $H_{i}$ and
suppose that $|A| \geq |B|$. Then $|A| \geq \frac{N - |H_{i}|}{2}
\geq n + m$. This means that any two vertices of $H_{i}$ form the
centers of a red copy of $K_{1, n} \cup K_{1, m}$, a contradiction.
This means we may assume that every part has order either $1$ or at
least $\frac{n - 4m - 5}{2}$.

Suppose there is a part $H_{0}$ of order $1$, say with $H_{0} =
\{v_{1}\}$. Since $G$ cannot be colored entirely with $2$ colors,
there must also be a part $H_{1}$ with $|H_{1}| \geq \frac{n - 4m -
5}{2} \geq m + 1$. Let $A$ be the set of vertices with red edges to
$v_{1}$ and $B$ be the set of vertices with blue edges to $v_{1}$,
again assuming that $|A| \geq |B|$. In particular, this means that
$|A| \geq \frac{(5n - 6)/2 - 1}{2} \geq n + m + 1$ so every vertex
of $G$ other than $v_{1}$ has at most $m$ incident red edges. This
means that every part of order at least $m + 1$ has all blue edges
to other parts (except to $v_{1}$). If $H_1$ is in $B$, then to
avoid a red copy of $K_{1, n} \cup K_{1, m}$, the edges from $A$ to
$H_1$ are blue, and hence there is a blue copy of $K_{1, n} \cup
K_{1, m}$, a contradiction. So $H_1$ is in $A$. In order to avoid
making a blue copy of $K_{1, n} \cup K_{1, m}$ centered at one
vertex in $H_{1}$ and one vertex in $G \setminus (H_{1} \cup
\{v_{1}\})$, we must have $|G \setminus (H_{1} \cup \{v_{1}\})| < n
+ 1$ but then $|H_{1}| \geq n + m + 1$, in order to avoid making a
blue copy of $K_{1, n} \cup K_{1, m}$ centered at two vertices of $G
\setminus (H_{1} \cup \{v_{1}\})$, we must have $|G \setminus (H_{1}
\cup \{v_{1}\})| = 1$, which contradicts to the fact $t\geq 4$. This
means that every part has order at least $\frac{n - 4m - 5}{2} \geq
m + 1$.

Finally suppose there is a part $H_{i}$ with $m + 1 \leq |H_{i}|
\leq \frac{n - 7}{2}$. Then letting $A$ be the set of vertices with
red edges to $H_{i}$ and $B$ be the set of vertices with blue edges
to $H_{i}$, we see that one of $A$ or $B$ (suppose $A$) has order at
least $\frac{N - |H_{i}|}{2} \geq n + 1$. Then there is a red copy
of $K_{1, n} \cup K_{1, m}$ centered at one vertex in $H_{i}$ and
another in $A$, a contradiction.
\end{proof}

Let $r$ be the number of parts of order at least $\frac{n - 1}{2}$
and call these parts ``big''. Call any remaining parts ``small''.
Next a claim about the small parts. By Claim~\ref{Claim:AllBigger},
there are at most $5$ parts in this Gallai partition of $G$.

We distinguish the following subcases to complete the proof of this case.

\begin{subcase}\label{Subcase:r5}
$r=5$.
\end{subcase}

To avoid a pair of vertices having all one color on edges to three
big parts, the reduced graph on the parts $H_1,H_2,H_3,H_4,H_5$ must
be the unique $2$-coloring of $K_{5}$ with no monochromatic
triangle, say with $H_1H_2H_3H_4H_5H_1$ and $H_1H_3H_5H_2H_4H_1$
making two complementary monochromatic cycles with in red and blue
respectively. Since $\sum_{i=1}^5|H_i| = \frac{5n-3}{2}$, it follows
that there exists a big part, say $H_1$, such that $|H_1|\geq
\frac{n+1}{2}$. Choose $v_2\in H_2$ and $v_{4} \in H_{4}$. Then the
edges from $v_2$ to $H_1 \cup H_3$ contain a red copy of $K_{1,n}$,
and the edges from $v_4$ to $H_5$ contain a red copy of $K_{1,m}$,
and so there is a red $K_{1,n}\cup K_{1,m}$, a contradiction.

\begin{subcase}
$r=4$.
\end{subcase}

To avoid a part having edges of all one color to three of the big
parts, by symmetry, the four big parts must form one of the
following two structures:
\begin{itemize}
\item Type $1$: There is a red cycle $H_1H_3H_4H_2H_1$ and a blue $2$-matching $\{H_1H_4,H_2H_3\}$ in the reduced graph, or
\item Type $2$: There is a red path $H_3H_2H_1H_4$ and a blue path $H_1H_3H_4H_2$ in the reduced graph.
\end{itemize}

First suppose that $t=4$. Since $\sum_{i=1}^4|H_i|=\frac{5n-3}{2}$,
it follows that there exists a big part, without loss of generality
(regardless of Type~$1$ or Type~$2$) say $H_1$, such that $|H_1|\geq
\frac{n+3}{2}$. Choose $v_1 \in H_1$ and $v_2\in H_2$. Then the
edges from $v_2$ to $(H_1 \setminus \{v_1\})\cup H_3$ contain a red
copy of $K_{1,n}$ and the edges from $v_1$ to $H_4$ contain a red
copy of $K_{1,m}$, so there is a red copy of $K_{1,n}\cup K_{1,m}$,
a contradiction.

Thus, we may assume that $t = 5$. Let $H_1,H_2,H_3,H_4$ be the big
parts, and $H_5$ be the (small) part of order at least
$\frac{n-5}{2}$. If the reduced graph does not consist of two $5$-cycles
$H_1H_2H_3H_4H_5H_1$ and $H_1H_3H_5H_2H_4H_1$, then there exist
three parts, say $H_2,H_3,H_5$, adjacent to the part $H_1$ by a
single color. Since
$|H_2|+|H_3|+|H_5|=n-1+\frac{n-5}{2}=n+\frac{n-7}{2}\geq n+m$, this
structure contains a monochromatic copy of $K_{1, n} \cup K_{1, m}$,
a contradiction. Thus, suppose that the reduced graph is two
$5$-cycles $H_1H_2H_3H_4H_5H_1$ and $H_1H_3H_5H_2H_4H_1$ say in red
and blue respectively. By the same argument used in
Subcase~\ref{Subcase:r5}, where $r = t = 5$, there is a
monochromatic copy of $K_{1,n}\cup K_{1,m}$, a contradiction.

\begin{subcase}
$r\leq 3$.
\end{subcase}

Since $t \geq 4$, there is at least one small part, say $H_{1}$, so
$\frac{n - 5}{2} \leq |H_{1}| \leq \frac{n - 3}{2}$. Let $A$ be the
set of vertices with red edges to $H_{1}$ and let $B$ be the set of
vertices with blue edges to $H_{1}$. If $|H_{1}| = \frac{n - 5}{2}$,
then one of $|A|$ or $|B|$ is large, with say $|A| \geq \ceil{
\frac{N - \frac{n - 5}{2}}{2} } \geq n + 1$ so there is a blue copy
of $K_{n + 1, m + 1}$ between $H_{1}$ and $A$, which contains a blue
copy of $K_{1, n} \cup K_{1, m}$, a contradiction. Thus, all small
parts have order $\frac{n - 3}{2}$. In order to avoid the same
construction, we must have $|A| = |B| = n$.

Also since $t \leq 5$, there is at least one big part.

The following facts are also immediate from the restrictions on small parts.

\begin{fact}\label{Fact:n15}
One of the following holds:\\
$(1)$ $A$ contains only one big part with $|A|=n$, or\\
$(2)$ $A$ contains a small part $H_{1,A}$ with
$|H_{1,A}|=\frac{n-3}{2}$ and a big part $H_{2,A}$ with
$|H_{2,A}|=\frac{n+3}{2}$.
\end{fact}

\begin{fact}\label{Fact:n16}
One of the following holds:\\
$(3)$ $B$ contains only one big part with $|B|=n$, or\\
$(4)$ $B$ contains a small part $H_{1,B}$ with
$|H_{1,B}|=\frac{n-3}{2}$ and a big part $H_{2,B}$ with
$|H_{2,B}|=\frac{n+3}{2}$.
\end{fact}

If $(1)$ and $(3)$ both hold, then there are only three parts
$H_1,A,B$ which contradicts the assumption that $t\geq 4$.

First suppose (without loss of generality) that $(1)$ and $(4)$
hold. If the edges from $A$ to $H_{1,B}$ (or $H_{2,B}$) are blue,
then for $u, v \in H_{1, B}$ (respectively $H_{2, B}$), the edges
from $\{u, v\}$ to $H_1\cup A$ contain a blue copy of $K_{1,n}\cup
K_{1,m}$, a contradiction. Thus, the edges from $A$ to $H_{1,B}\cup
H_{2,B}$ must all be red. Then if we choose $u, v \in A$, then the
edges from $\{u, v\}$ to $H_1 \cup B$ contain a red copy of
$K_{1,n}\cup K_{1,m}$, a contradiction.

Finally assume that $(2)$ and $(4)$ hold. If all of the edges from
$H_{i,B}$ (with $i \in \{1, 2\}$) to $A$ are blue, then there is a
blue copy of $K_{1, n} \cup K_{1, m}$ centered at two vertices of
$H_{i, B}$, a contradiction. Thus, we may assume that for each part
$H_{i, B}$, there is a part $H_{j, A}$ such that the edges in
between these two parts are red where $i, j \in \{1, 2\}$. Similarly
from the opposite perspective, for each part $H_{j, A}$, there is a
part $H_{i, B}$ such that the edges between these two parts are blue
where $i, j \in \{1, 2\}$. From the above arguments, without loss of
generality, we may assume that

\begin{itemize}
\item the edges from $H_{1,A}$ to $H_{1,B}$ and the edges from $H_{2,A}$ to
$H_{2,B}$ are red;

\item the edges from $H_{1,A}$ to $H_{2,B}$ and the edges from $H_{2,A}$ to
$H_{1,B}$ are blue;
\end{itemize}

Now let $v \in H_{2, A}$ and let $u \in H_{1, A}$. Then the edges
from $u$ to $H_{2, B}$ contain a red copy of $K_{1, m}$ and the
edges from $v$ to $H_{1} \cup H_{2, B}$ contain a red copy of $K_{1,
n}$, a contradiction.

\begin{case}
$k\geq 4$.
\end{case}

Let $T$ be a largest set of vertices in $G$ such that each vertex in
$T$ has edges of all one color to $G \setminus T$ with the added
restriction that $|G \setminus T| \geq n + m$. For each $i$ with $1
\leq i \leq k$, if we let $T_{i}$ be the set of vertices in $T$ with
all edges of color $i$ to $G \setminus T$, then in order to avoid a
monochromatic copy of $K_{1, n} \cup K_{1, m}$, we have $|T_{i}|
\leq 1$ for all $i$. This means that $|T| \leq k$. Let $G' = G
\setminus T$ so $|G'| \geq \frac{5n - 12}{2}$.

Within $G'$, there is no vertex with degree at least $m$ in a color
$i$ for which $T_{i} \neq \emptyset$ to avoid creating a copy of
$K_{1, n} \cup K_{1, m}$ in color $i$. We first claim that there are
at least two colors not appearing on edges from $T$ to $G'$.

\begin{claim}\label{Claim12}
$|T| \leq k - 2$.
\end{claim}

\begin{proof}
First if $|T| = k$, then every vertex in $G'$ has color degree at
most $m - 1$ within $G'$ in every color. This contradicts
Theorem~\ref{Thm:K3-Star} since there must be a vertex with degree
at least $\frac{2|G'|}{5} \geq m$ in some color.

Next if $|T| = k - 1$, then there is again a vertex with degree at
least $m$ in some color (say red) but this means that red must be
the color not represented on edges between $T$ and $G'$. Consider a
Gallai partition of $G'$, say with the smallest possible number of
parts. If this partition has only $2$ parts, then red must be the
color between the parts and in order to avoid creating a red copy of
$K_{1, n} \cup K_{1, m}$, one part must have order $1$. This part
can be moved to $T$, contradicting the maximality of $|T|$. Thus, we
may assume that there are at least $4$ parts in the Gallai partition
of $G'$, say with red and blue appearing on edges between the
parts. By minimality of the number of parts in this partition, both
red and blue must induce connected subgraphs of the reduced graph.
This means that in order to avoid having a vertex with at least $m$
edges in blue, all parts of this partition must have order at most
$m - 1$. Every vertex $v \in G'$ must then have at least $|G| - 2(m
- 1) > n + m + 2$ incident red edges. This means that there is a red
copy of $K_{1, n} \cup K_{1, m}$ centered at any pair of vertices
within $G'$, a contradiction.
\end{proof}

Choose a Gallai partition of $G'$ with the smallest number of parts,
say $q$, and let red and blue be the colors that appear in between
the parts of this partition. Note that from the argument above, red
and blue do not appear on on edges coming from vertices of $T$. Let
$s = |T|$.

If $2\leq q \leq 3$, then by the minimality of $q$, we may assume
that $q=2$. Then if $I_{1}$ and $I_{2}$ are the parts of this
partition, say with $|I_{1}| \geq |I_{2}|$, we have \beqs
|I_{1}|+|I_{2}| & = & |G|-|T|\\
~ & \geq & \frac{5n-6}{2}+(k-3)-s\\
~ &\geq &2n+\frac{n-8}{2}+(k-s-2), \eeqs which means that
$|I_{1}|\geq n+\frac{n-4}{4}\geq n+m$. Then any two vertices of
$I_{2}$ form the centers of a monochromatic copy of $K_{1, m} \cup
K_{1, n}$, a contradiction. Thus, we may assume that $q \geq 4$.

If $s \leq k - 3$, then we may apply the same argument as in the
case $k = 3$ (Cases~\ref{Case:k3t2} and~\ref{Case:k3t4}). Thus,
suppose $s = k - 2$. This means that $|G'| \geq \frac{5n - 8}{2}$.

\begin{claim}\label{Claim:m+1}
Every part has order at least $m + 1$.
\end{claim}

\begin{proof}
First suppose there is a part $H_{0}$ of order $1$. Let $A$ be the
set of vertices with red edges to $H_{0}$ and let $B$ be the set of
vertices with blue edges to $H_{0}$, say with $|A| \geq |B|$. This
means $|A| \geq n + 2m + 1$ so there can be no vertex anywhere other
than the vertex of $H_{0}$ with at least $m$ red edges. This means
that every vertex from $B$ has at least $n + m + 1$ red edges to
$A$, producing a red copy of $K_{1, n} \cup K_{1, m}$.

Thus, suppose there is a part $H$ of order $r$ with $2 \leq r \leq
m$. Then each vertex in $H$ has at least $\frac{|G \setminus H|}{2}
\geq n + m$ edges in a single color to $G \setminus H$ (since $m
\leq \frac{n - 8}{6}$). Choosing two vertices from $H$ produces a
monochromatic copy of $K_{1, n} \cup K_{1, m}$, a contradiction.
\end{proof}

If there is a vertex $v$ with at least $n + 1$ incident red (or
blue) edges to other parts of the Gallai partition, then if we let
$u$ be a vertex with a red edge to $v$, then $v$ is the center of a
red copy of $K_{1, n}$ avoiding $u$ and $u$ is the center of a
disjoint red copy of $K_{1, m}$ since $v$ is in a part of the Gallai
partition with order at least $m + 1$ (by Claim~\ref{Claim:m+1}).
Thus, there is no vertex $v \in G'$ with red or blue degree at least
$n$ to other parts of the Gallai partition.

Applying Lemma~\ref{Lemma:StarStability} with $r = 8$, we see that
the Gallai partition of $G'$ has exactly $5$ parts, each of order at
least $\frac{n - 5}{2}$, and every vertex of $G'$ has at least $n -
5$ incident edges in red and at least $n - 5$ incident edges in blue
to other parts of the Gallai partition.

Let $H_{1}$ be a largest part of this Gallai partition of $G'$.
Since $H_{1}$ contains no rainbow triangle, by
Theorem~\ref{Thm:K3-Star}, there is a monochromatic star on at least
$\frac{2|H_{1}|}{5} \geq m$ edges within $H_{1}$. If this star has
some color $i$ for which $T_{i} \neq \emptyset$, then there is a
monochromatic copy of $K_{1, n} \cup K_{1, m}$ in color $i$ so this
must be either red or blue, say red. This along with the structure
we have already shown implies the existence of a red copy of $K_{1,
n} \cup K_{1, m}$, a contradiction. This completes the proof of
Proposition~\ref{Prop:Small_m_Large_n_Upper}.
\end{proof}

\section{For equal $m$ and $n$} \label{Sec:Equal}

In this section we give a proof for Theorem \ref{Thm:GallaiRamseyLargem}.

For $m=n$, we first give a lower bound for the Gallai Ramsey number of
$K_{1,n}\cup K_{1,n}$.

\begin{lemma}\label{Lemma:EqualLower}
For $k \geq 3$,
$$
{\rm gr}_{k}(K_{3} :K_{1,n}\cup K_{1,n})\geq 3n+k-1.
$$
\end{lemma}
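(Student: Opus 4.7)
The plan is to prove the lower bound by induction on $k$, exhibiting for each $k\geq 3$ a $k$-edge-coloring of $K_{N}$ with $N=3n+k-2$ that contains no rainbow triangle and no monochromatic copy of $K_{1,n}\cup K_{1,n}$.

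For the inductive step from $k-1$ to $k$ (with $k\geq 4$), I take a valid $(k-1)$-coloring $G_{k-1}$ of $K_{3n+k-3}$ given by the inductive hypothesis and form $G_{k}$ by adjoining a single new vertex $v_{k}$, coloring every edge from $v_{k}$ to the existing vertex set with a fresh color $k$. No triangle through $v_{k}$ can be rainbow because two of its edges use color $k$, and any triangle that avoids $v_{k}$ is rainbow-free by induction. The color-$k$ subgraph is the star $K_{1,3n+k-3}$ centered at $v_{k}$, which has only one vertex of positive degree and therefore contains no two vertex-disjoint copies of $K_{1,n}$; for $i\leq k-1$ the color-$i$ subgraph of $G_{k}$ is the color-$i$ subgraph of $G_{k-1}$ together with an isolated vertex, and hence inherits the $K_{1,n}\cup K_{1,n}$-free property.

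The base case $k=3$ requires an explicit 3-coloring of $K_{3n+1}$. My proposed starting point is the Grossman extremal 2-coloring of $K_{3n-1}$ certifying $R(K_{1,n}\cup K_{1,n})>3n-1$: two disjoint cliques $A$ of order $n$ and $B$ of order $2n-1$, with color~1 inside each clique and color~2 on the complete bipartite graph between $A$ and $B$. To this I adjoin two more vertices $u,v$, place the edge $uv$ in the new color~3, and distribute the remaining edges from $u$ and $v$ to $A\cup B$ among the three colors in a way that (i) creates no rainbow triangle, (ii) keeps color~1 a disjoint union of at most two cliques whose sizes stay below the $K_{1,n}\cup K_{1,n}$ threshold, (iii) keeps color~2 bipartite with sides of size at most $n$ and $2n-1$, and (iv) makes color~3 essentially a star so that at most one vertex has large color-$3$ degree. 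With such a distribution in hand, the three color classes are inspected one at a time and seen to be $K_{1,n}\cup K_{1,n}$-free by the classical extremal analysis for disjoint stars.

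The main obstacle is the base case: every natural way of routing the $u$- and $v$-edges forces a rainbow triangle with the color-$2$ edge $ab$ (whenever $ua$ and $ub$ use different colors from $\{1,2\}$) or else shifts one side of the color-$1$ or color-$2$ subgraph up by enough vertices to admit two disjoint copies of $K_{1,n}$. Finding the assignment that threads all of these constraints simultaneously, and carrying out the accompanying case analysis color by color, is the heart of the proof.
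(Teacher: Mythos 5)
Your inductive step (adding one new vertex per new color, all of whose edges get the fresh color) is exactly what the paper does and is fine. The problem is the base case, which you correctly identify as ``the heart of the proof'' but then leave entirely unproved: you list four desiderata for the edges from $u$ and $v$ but never exhibit an assignment satisfying them. Worse, no such assignment exists for the skeleton you fix. Keep your notation: $A$ a color-$1$ clique of order $n$, $B$ a color-$1$ clique of order $2n-1$, color $2$ between them, and $uv$ in color $3$. First, no edge from $u$ (or $v$) to $A$ may be color $2$: if $c(ua)=2$, then the star at $a$ with leaves $\{u\}$ plus $n-1$ vertices of $B$ and the star at any $a'\in A\setminus\{a\}$ with leaves the remaining $n$ vertices of $B$ form a color-$2$ copy of $K_{1,n}\cup K_{1,n}$. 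Second, rainbow-triangle avoidance on triangles $uab$ (with $ab$ color $2$) and $ubb'$ (with $bb'$ color $1$) forces: if some $ua$ is color $3$, then $u$--$B$ is entirely color $2$ or entirely color $3$; the all-$2$ option dies by the same two-disjoint-color-$2$-stars argument (one centered at $u$ inside $B$, one centered in the leftover of $B$ with leaves $A$), and the all-$3$ option forces all of $u$--$A$ to be color $3$ as well. So $u$--$A$ is either all color $1$ or all color $3$, and likewise for $v$. If $u$--$A$ is all color $1$, then a color-$1$ star centered in $A$ with leaves $(A\setminus\{a\})\cup\{u\}$ is disjoint from a color-$1$ star inside $B$. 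If both $u$ and $v$ are all color $3$ to $A\cup B$, color $3$ contains two disjoint stars centered at $u$ and $v$. Every branch fails, so this base construction cannot be completed no matter how cleverly you route the edges.

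The paper's construction avoids this by assigning the colors differently: color $3$ is placed on the \emph{large bipartite graph} between a set of $2n-1$ vertices and a set of $n$ vertices (which is $K_{1,n}\cup K_{1,n}$-free because the two sides are too small to host two disjoint stars), the $K_{2n-1}$ is edge-decomposed into two $(n-1)$-regular graphs in colors $1$ and $2$, the $K_n$ is colored $1$, and then two extra vertices $v,w$ are added with $v$ joined monochromatically in color $1$ and $w$ joined monochromatically in color $2$ (in particular $vw$ gets color $2$, not color $3$). The point is that in each of colors $1$ and $2$ there is exactly one vertex of degree exceeding $n$, and every other vertex has degree at most $n$ with that special vertex as a forced leaf of any $K_{1,n}$ it centers, so two disjoint monochromatic stars are impossible. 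To repair your proof you would need to replace your base case with a construction of this type; the inductive step can stay as written.
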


\begin{proof}
We prove this result by inductively constructing a coloring $G_{k}$
of $K_{t}$ where $t=3n+k-2$ which contains no rainbow triangle and
no monochromatic copy of $K_{1,n}\cup K_{1,n}$.

Let $G_3$ be a graph constructed by steps as follows:
\begin{itemize}
\item Let $F_1$ be a complete graph $K_{2n-1}$ edge-decomposed into an $(n-1)$-regular graph of order $2n-1$ with color $1$ and an $(n-1)$-regular graph of order $2n-1$ with color $2$.
\item Let $F_2$ be a complete graph $K_{3n-1}$ obtained from the graph $F_1$ and a complete graph $K_n$ colored with color $1$ by adding all edges from $F_1$ to $K_{n}$ with color $3$.
\item Let $G_3$ be a complete graph $K_{3n+1}$ obtained from $F_2$ by adding two new vertices $v,w$ and adding the edges from $v$ to $F_2$ with color $1$, and the edges from $w$ to $F_2\cup \{v\}$ with color $2$.
\end{itemize}

In order to construct $G_{i + 1}$, we a vertex $v_{i + 1}$ to
$G_{i}$ with all edges from $v_{i + 1}$ to $G_{i}$ having color $i +
1$ for $3 \leq i \leq k-1$. This coloring certainly contains no
rainbow triangle or monochromatic copy of $K_{1,n}\cup K_{1,n}$ and
has order $3n+k-2$, completing the construction.
\end{proof}

\begin{lemma}\label{Lemma:EqualUpper}
For $k \geq 3$,
$$
{\rm gr}_{k}(K_{3} :K_{1,n}\cup K_{1,n})\leq 3n+k-1.
$$
\end{lemma}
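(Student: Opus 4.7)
The plan is to induct on $k$, with the base case $k=3$ handled directly from the Gallai partition of $G$, and the inductive step reduced to a one-vertex deletion argument.

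For the inductive step ($k\geq 4$): I would aim to find a vertex $v$ together with a color $c$ such that the color-$c$ edges of $G$ form a star centered at $v$, i.e.\ $c$ appears nowhere else. Deleting $v$ yields a Gallai $(k-1)$-coloring of $K_{3n+k-2}=K_{3n+(k-1)-1}$, to which the inductive hypothesis applies and finishes the proof. The existence of such $v$ and $c$ would follow from the Gallai partition of $G$: since only two colors appear in the reduced graph, each of the other $k-2$ colors is confined to edges inside parts, and if no color is so confined to a single vertex then one can argue via the sizes forced by the constraint $|G|=3n+k-1$ that two disjoint monochromatic stars of that color appear, or, alternatively, the between-part structure already supplies a monochromatic $K_{1,n}\cup K_{1,n}$ (analogous to the maximal $T$ argument used in the proof of Proposition~\ref{Prop:Small_m_Large_n_Upper}).

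For the base case $k=3$ with $|G|=3n+2$, I would take a Gallai partition with the smallest number of parts $t$, red and blue appearing between parts. If $t=2$ with both parts of size at least $2$, the bound $|H_1|+|H_2|=3n+2$ easily yields two vertex-disjoint red $K_{1,n}$'s with centers chosen from opposite parts (or from $H_2$ when $|H_2|\geq 2$ is small) and leaves inside $H_1$. If $t\geq 3$, I would run the same kind of casework as in Proposition~\ref{Prop:Small_m_Large_n_Upper}: bound part sizes by forbidding any vertex from having $n$ incident edges in one color to another part, show the reduced graph is forced (often to the unique triangle-free $2$-coloring of $K_5$), and combine edges between parts with a single extra leaf inside a part to realize the $K_{1,n}\cup K_{1,n}$.

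The main obstacle is the subcase $t=2$ with $|H_2|=1$. Here the lone vertex $v\in H_2$ has all red edges into $H_1$ (so $v$ is the center of a red $K_{1,3n+1}$) and we must find either a second red $K_{1,n}$ entirely inside $H_1$, or a monochromatic $K_{1,n}\cup K_{1,n}$ in blue or green inside $H_1$. I would split along whether some vertex of $H_1$ has red degree at least $n$ inside $H_1$: if yes, this vertex together with $v$ yields disjoint red stars since $|H_1|=3n+1\geq 2n+1$; if no, every vertex of $H_1$ has combined blue+green degree at least $2n+1$ inside $H_1$, hence blue or green degree at least $n+1$. I would then produce two disjoint same-color stars by applying Theorem~\ref{Thm:K3-Star} to $H_1$ (which has at least $\tfrac{5n-3}{2}$ vertices) to locate a first monochromatic $K_{1,n}$, and exploiting the large minimum combined blue+green degree on the remaining $2n$ vertices to find a second disjoint star of the same color. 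This Grossman-type step inside $H_1$ is the technical heart of the argument, since the usual Gallai-partition casework on $G$ is not strong enough on its own to control the interior of $H_1$.
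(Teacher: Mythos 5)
There are two genuine gaps, and both are avoided in the paper by a single structural device you did not use: a \emph{maximal} set $T$ of vertices each joined monochromatically to the rest (with $|G\setminus T|\geq 2n$). One shows $|T|\leq k-2$ because a vertex of $T$ in color $i$ forbids any vertex of $G'=G\setminus T$ from having $n$ incident edges of color $i$ inside $G'$, which by Lemma~\ref{Lemma:StarStability} rules out $i$ being red or blue; hence $|G'|\geq 3n+1$ and all remaining work is a two-color argument on $G'$. Your inductive step is the first gap: you need a vertex $v$ and a color $c$ whose edges form a star at $v$, but a color absent from the reduced graph is merely confined to the insides of parts, where it can occupy many edges at many vertices (e.g.\ a near-perfect matching, or a sparse graph spread over two parts) without forming a star and without forcing a monochromatic $K_{1,n}\cup K_{1,n}$. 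Your fallback (``one can argue via the sizes \ldots or the between-part structure already supplies a monochromatic copy'') is not an argument; the whole difficulty of the $k\geq 4$ case is exactly here, and the paper resolves it by the one-shot peeling of $T$ rather than by induction on $k$.

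The second gap is in your $t=2$, $|H_2|=\{v\}$ subcase, which you correctly identify as the technical heart but do not actually close. In the branch where every vertex of $H_1$ has red degree at most $n-1$ inside $H_1$, Theorem~\ref{Thm:K3-Star} gives a monochromatic $K_{1,n}$ in $H_1$, say with vertex set $S$, $|S|=n+1$; but a vertex $u$ of the remaining $2n$ vertices is only guaranteed $2n+1-(n+1)=n$ blue-plus-green edges into $H_1\setminus S$, which may split as unfavorably as $\lceil n/2\rceil$ in each color, and moreover the color of the star on $S$ (possibly green) need not match the color in which $u$ has large degree. Since only one vertex can have blue degree $\geq 2n+1$ and only one can have green degree $\geq 2n+1$ (else two such vertices are centers of the desired double star), no easy counting forces the second disjoint star; finishing here would require a Grossman-type argument for non-complete host graphs, which you do not supply. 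The paper never meets this configuration: a singleton part with all red edges to the rest qualifies for membership in $T$, so its existence contradicts the maximality of $T$, and the $t\geq 4$ cases are then handled by the detailed claims (Claims~\ref{ClaimnA}--\ref{Claimn21} and the ``choose-in-order'' device), which are substantially more involved than the casework of Proposition~\ref{Prop:Small_m_Large_n_Upper} that you propose to imitate.
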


\begin{proof}
We will assume $n$ is odd since the even case can be proved
similarly. Suppose $k\geq 3$ and let $G$ be a Gallai coloring of
$K_{3n+k-1}$.

Let $T$ be a maximal set of vertices where each vertex of $T$ has
all edges in a single color to $G \setminus T$ with the additional
assumption that $|G \setminus T| \geq 2n$. Then in order to avoid a
monochromatic copy of $K_{1, n} \cup K_{1, n}$, there is at most one
vertex in $T$ with edges of each color to $G \setminus T$, meaning
that $|T| \leq k$. Let $G' = G \setminus T$ so $|G'| = |G| - |T|
\geq 3n - 1$.

Since $G'$ contains no rainbow triangle, by
Theorem~\ref{Thm:G-Part}, there is a Gallai partition of $G'$, say
using red and blue on edges between the parts of this partition. In
order to avoid a monochromatic copy of $K_{1, n} \cup K_{1, n}$, if
there is a vertex $v_{i} \in T$ with color $i$ on all edges to $G'$,
there is no vertex with at least $n$ incident edges in a single
color $i$ within $G'$. By Lemma~\ref{Lemma:StarStability}, this
means that red and blue cannot appear on edges between $T$ and $G'$
so $|T| \leq k - 2$ and $|G'| \geq 3n + 1$.

Let $H_{1}, H_{2}, \dots, H_{t}$ be the parts of the Gallai
partition of $G'$ chosen so that $t$ is minimized, say with $|H_{i}|
\geq |H_{i + 1}|$ for all $i$. If $2 \leq t \leq 3$, then by
minimality of $t$, we may assume $t = 2$, say with red edges between
$H_{1}$ and $H_{2}$. In this case, if $|H_{2}| = 1$, then the vertex
of $H_{2}$ can be moved to $T$, contradicting the maximality of $T$.
Thus, $|H_{2}| \geq 2$. In order to avoid a red copy of $K_{1, n}
\cup K_{1, n}$, we have $|H_{1}| \leq 2n - 1$, so $|H_{2}| \geq (3n
+ 1) - (2n - 1) = n + 2$. We then find a red copy of $K_{1, n} \cup
K_{1, n}$ centered on one vertex from each of $H_{1}$ and $H_{2}$, a
contradiction. This means that we may assume that $t \geq 4$.

Since all edges between any pair of parts have a single color and a
monochromatic copy of $K_{n + 1, n + 1}$ contains a monochromatic
copy of $K_{1, n} \cup K_{1, n}$, we immediately see that there is
at most one part with at least $n + 1$ vertices.

\setcounter{case}{0}
\begin{case}
There is a part $H_{1}$ with $|H_{1}| \geq n + 1$.
\end{case}

For any other part $H_{i}$ with $2 \leq i \leq t$, let $A_{i}$ be
the set of vertices with red edges to $H_{i}$ and let $B_{i}$ be the
set of vertices with blue edges to $H_{i}$. Certainly $H_{1}$ is in
either $A_{i}$ or $B_{i}$, so the next claim shows the opposite set
is small.

\begin{claim}\label{ClaimnA}
If $H_{1} \subseteq A_{i}$ (or $H_{1} \subseteq B_{i}$), then
$|B_{i}| \leq n - 1$ (resp. $|A_{i}| \leq n - 1$).
\end{claim}

\begin{proof}
Without loss of generality, suppose $H_{1} \subseteq A_{i}$ and for
a contradiction, suppose that $|B_{i}| \geq n$. First we
additionally assume that $|B_{i}| \geq n + 1$. Then by minimality of
$t$, there is a part with blue edges to $H_{1}$, say containing a
vertex $v$. Then choosing any vertex $u \in H_{i}$, we have a blue
copy of $K_{1, n}$ centered at $u$ with edges to $B_{i} \setminus
\{v\}$ (since $v$ might be in $B_{i}$) and a blue copy of $K_{1, n}$
centered at $v$ with edges to $H_{1}$, a contradiction. Thus, we may
assume that $|B_{i}| = n$.

Next suppose that $A_{i} = H_{1}$. If $|H_{i}| = 1$, then $|H_{1}| =
|G'| - 1 - n \geq 2n$. Since $|B_{i}| = n \geq 3$, there are two
vertices $u, v \in B_{i}$ with all one color on their edges to
$H_{1}$. These form the centers of a monochromatic copy of $K_{1, n}
\cup K_{1, n}$, so we may assume that $|H_{i}| \geq 2$. Since
$|H_{i} \cup H_{1}| = |G'| - |B_{i}| \geq 2n + 1$, there can be at
most one vertex in $B_{i}$ with blue edges to $H_{1}$. On the other
hand, by minimality of $t$, there must then be exactly one vertex in
$B_{i}$ with blue edges to $H_{1}$. This means there are exactly $n
- 1$ vertices in $B_{i}$ with all red edges to $H_{1}$. Then if we
choose one vertex $u \in H_{i}$ and one vertex $v \in H_{1}$, then
there is a red copy of $K_{1, n} \cup K_{1, n}$ centered at $u$ with
edges to $H_{1} \setminus \{v\}$ and centered at $v$ with edges to
$B \cup H_{i}$. Thus, we may assume that $A_{i}$ contains at least
one other (smaller) part in addition to $H_{1}$.

Let $H_{j}$ be a part in $A_{i} \setminus H_{1}$. Then in order to
avoid creating a blue copy of $K_{1, n} \cup K_{1, n}$ centered at a
vertex in $H_{i}$ and a vertex in $H_{j}$, all edges from $H_{j}$ to
$H_{1}$ must be red. By minimality of $t$, there is a part $H_{j}
\subseteq A_{i}$ with some red edges to a part $H_{\ell} \subseteq
B_{i}$. Choose a vertex $v \in H_{j}$ and a vertex $u \in H_{i}$.
Choose $n$ red neighbors of $v$ by first selecting all of
$H_{\ell}$, then vertices from $H_{i} \setminus \{u\}$, and finally
some vertices of $H_{1}$ as needed. Let $S$ be the set of vertices
in this red copy of $K_{1, n}$ centered at $v$. Since $|B_{i}| = n$,
we have $|B_{i} \setminus H_{\ell}| \leq n - 1$. Since $|G'| - |S|
\geq 2n$, there are at least $2n - 1 - (n - 1) = n$ remaining red
neighbors of $u$, to form a second disjoint red copy of $K_{1, n}$,
a contradiction.
\end{proof}

If we choose $i = 1$, then in order to avoid a monochromatic copy of
$K_{n + 1, n + 1}$, we immediately see that $|A_{1}|, |B_{1}| \leq
n$. It turns out that we can say a bit more.

\begin{claim}\label{ClaimnB}
The reduced graph of $A_{1}$ is a blue complete graph and the
reduced graph of $B_{1}$ is a red complete graph.
\end{claim}

\begin{proof}
We show that the reduced graph of $A_{1}$ is a blue complete graph
since the other proof is symmetric. Suppose not, so there are red
edges between a pair of parts $H_{i}$ and $H_{j}$ within $A_{1}$.
Choose one vertex from each of these parts, say $v_{i} \in H_{i}$
and $v_{j} \in H_{j}$. Then there is a red copy of $K_{1, n}$
centered at $v_{i}$ with $n$ edges to all of $(H_{j} \setminus
v_{j})$ and part of $H_{1}$. Let $S$ be the vertices of this star.
Since the edges from $H_{1}$ to $H_{j}$ are red, by
Claim~\ref{ClaimnA}, we have $|B_{j}| \leq n - 1$. In $G' \setminus
S$, there are at least $|G'| - |S| - |B_{j}| - 1 \geq (3n + 1) - (n
+ 1) - (n - 1) - 1 = n$ red edges incident to $v_{j}$. This star,
along with $S$, forms a red copy of $K_{1, n} \cup K_{1, n}$, a
contradiction.
\end{proof}

Let $H_{x}$ be a smallest part within $A_{1} \cup B_{1}$, say with
$H_{x} \subseteq A_{1}$. Let $B_{1}'$ be the union of the parts with
red edges to $H_x$ and let $B_{1}''$ be the union of the parts with
blue edges to $H_x$. To avoid a blue $K_{1,n}\cup K_{1,n}$, there
exist a part $H_y$ in $B_{1}''$ and a part $H_z$ in $A_{1}$ such
that the edges from $H_y$ to $H_z$ are red. To avoid a red
$K_{1,n}\cup K_{1,n}$, we have $|B_{1}'|+|H_1|+|H_y|<2n$, and hence
$|B_{1}''|+|A_1|-|H_y|\geq n+1$. Since $H_{x}$ is a smallest part,
it follows that $|B_{1}''|+|A_1|-|H_x|\geq n+1$. Choose $u\in H_x$
and $v\in B_1'$. Then the edges from $u$ to $(B_1''\cup A_1)-H_x$
and the edges $v$ to $H_1$ form a blue $K_{1,n}\cup K_{1,n}$, a
contradiction.


\begin{case}
Every part of the Gallai partition of $G'$ has order at most $n$.
\end{case}

Again for each part $H_{i}$ of the Gallai partition of $G'$, let
$A_{i}$ be the set of vertices with red edges to $H_{i}$ and let
$B_{i}$ be the set of vertices with blue edges to $H_{i}$. For any
pairwise disjoint sets of vertices $V_{1}, V_{2}, \dots, V_{\ell}$
and a disjoint vertex $v$, we say that we \emph{choose-in-order $n$
neighbors of $v$ from $V_{1}, V_{2}, \dots, V_{\ell}$} if we choose
all neighbors of $v$ from $V_{1}$, then from $V_{2}$, and so on
until we have $n$ neighbors of $v$ by using up all of each set
before moving on to the subsequent set. Obviously, this assumes that
$|V_{1} \cup V_{2} \cup \dots \cup V_{\ell}| \geq n$.

\begin{claim}\label{Claimn15}
There exists a part $H_{i}$ such that one of $|A_{i}| \leq n$ or $|B_{i}| \leq n$.
\end{claim}

\begin{proof}
Suppose not, so $|A_{i}| \geq n + 1$ and $|B_{i}| \geq n + 1$ for
all $i$ with $1 \leq i \leq t$. Choose an arbitrary index $i$.

Suppose further that there is a vertex $v \in A_{i}$ with at most $n
- 1$ blue edges to $B_{i}$. Let $H_{j} \subseteq A_{i}$ be the part
containing $v$ and choose $u \in H_{i}$. Since $|A_{j}| \geq n + 1$,
there are at least $n + 1$ red neighbors of $v$. Choose-in-order $n$
neighbors of $v$ using red edges from the sets $H_{i} \setminus
\{u\}, B_{i}, A_{i}$. Let $S$ be the resulting red copy of $K_{1,
n}$. Since we have assumed $|S| = n + 1$, $|H_{i}| \leq n$, and
$|B_{i} \setminus S| \leq n - 1$, we have that $|A' \setminus S|
\geq n$. This means that there remains in $G' \setminus S$ a red
copy of $K_{1, n}$ centered at $u$, a contradiction. Thus, we may
assume that every vertex in $A_{i}$ has at least $n$ blue edges to
$B_{i}$ and similarly every vertex in $B_{i}$ has at least $n$ red
edges to $A_{i}$.

Then the number of red edges plus the number of blue edges between
$A_{i}$ and $B_{i}$ is at least $n\cdot |A_{i}| + n\cdot |B_{i}| =
n(3n + 1 - |H_{i}|)$. On the other hand, the total number of edges
between $A_{i}$ and $B_{i}$ is $|A_{i}||B_{i}| \leq \left( \frac{3n
+ 1 - |H_{i}|}{2} \right)^{2} < n(3n + 1 - |H_{i}|)$, a
contradiction.
\end{proof}

By Claim~\ref{Claimn15}, there is a part $H_{j}$ such that either
$|A_{j}| \leq n$ or $|B_{j}| \leq n$, say $|B_{j}| \leq n$. We now
consider cases based on the value of $|B_{j}|$.

\begin{subcase}
$|B_{j}| \leq n - 1$.
\end{subcase}

First a claim about the red edges incident to a vertex in $A_{j}$.

\begin{claim}\label{Claimn17}
For each vertex $v \in A_{j}$, there are at most $n$ red edges incident to $v$.
\end{claim}

\begin{proof}
Suppose, for a contradiction, that there is a vertex $v \in A_{j}$
with at least $n + 1$ incident red edges and let $u$ be any vertex
in $H_{j}$. Then choose-in-order $n$ vertices with red edges to $v$
from $H_{j} \setminus \{u\}, B_{j}, A_{j}$ and let $S$ be the
resulting red copy of $K_{1, n}$. Then since $|B_{j}| \leq n - 1$,
we have $|A_{j} \setminus S| \geq n$ so there is a second red copy
of $K_{1, n}$ centered at $u$ with edges to $A_{j} \setminus S$, a
contradiction.
\end{proof}

By Claim~\ref{Claimn17}, every vertex in $A_{j}$ has at most $n$
incident red edges and so at least $n + 1$ incident blue edges. Next
a claim about the reduced graph restricted to $A_{j}$.

\begin{claim}\label{Claimn18}
The blue edges in the reduced graph of $G'$ restricted to $A_{j}$ form a union of cliques.
\end{claim}

\begin{proof}
Suppose, for a contradiction, that there are three parts within
$A_{j}$, say $H_{i_{1}}, H_{i_{2}}$, and $H_{i_{3}}$ such that the
edges from $H_{i_{1}}$ to $H_{i_{2}}$ are red while the edges from
$H_{i_{3}}$ to $H_{i_{1}} \cup H_{i_{2}}$ are blue. Note that
$H_{i_{2}} \subseteq A_{i_{1}}$ and $H_{i_{3}} \subseteq B_{i_{1}}$.

If $|A_{i_{1}}| \geq n + 1$ (so each vertex in $H_{i_{1}}$ has at
least $n + 1$ incident red edges to other parts), then let $u \in
H_{j}$ and $v \in H_{i_{1}}$ and choose-in-order $n$ vertices with
red edges to $v$ from $H_{j} \setminus \{u\}, B_{j}, A_{j}$ to form
a red copy of $K_{1, n}$, say $S$. Since $|B_{j} \setminus S| \leq n
- 1$ and $|S| = n + 1$, we have $|A_{j} \setminus S| \geq n$ so
there is a (disjoint) red copy of $K_{1, n}$ centered at $u$, a
contradiction meaning that $|A_{i_{1}}| \leq n$.

Since $|A_{i_{1}}| \leq n$ and $|H_{i_{1}}| \leq n$, we have
$|B_{i_{1}}| \geq n + 1$ (so each vertex in $H_{i_{1}}$ has at least
$n + 1$ blue edges to other parts). Let $u \in H_{i_{1}}$ and $v \in
H_{i_{3}}$ and choose-in-order $n$ vertices with blue edges to $v$
from $H_{i_{1}} \setminus \{u\}, H_{i_{2}}, B_{i_{3}}$ and let $S$
be the resulting blue copy of $K_{1, n}$. Since $H_{i_{2}} \subseteq
A_{i_{1}}$, we have $|A_{i_{1}} \setminus S| \leq n - 1$. This means
that $u$ has at least $|G' \setminus (H_{i_{1}} \cup A_{i_{1}})|
\geq n$ blue edges to $G' \setminus S$. This produces a second
disjoint blue copy of $K_{1, n}$, for a contradiction.
\end{proof}

By Claim~\ref{Claimn18}, the blue edges in the reduced graph of $G'$
restricted to $A_{j}$ form a union of cliques, say $J_{1}, J_{2},
\dots, J_{p}$. If $p \geq 2$, then all edges between pairs of these
cliques must be red. By Claim~\ref{Claimn17} (considering a vertex
in $J_{1}$), $|(A_{j} \setminus J_{1}) \cup H_{j}| \leq n$ and
similarly (by considering a vertex in $J_{2}$) $|J_{1}| < n$ so
$|A_{j}| + |H_{j}| \leq 2n - 1$. Since $|B_{j}| \leq n - 1$, this
means that $|G'| \leq 3n - 2$, a contradiction, meaning that $p = 1$
so we arrive at the following fact.

\begin{fact}\label{Fact:blue}
The reduced graph restricted to $A_{j}$ is a single blue clique.
\end{fact}

\begin{claim}\label{Claimn21}
For any three parts $X_{1}, X_{2} \subseteq A_{j}$ and $Y \subseteq
B_{j}$, if the edges from $Y$ to $X_{1}$ are red, then the edges
from $Y$ to $X_{2}$ are also red. Symmetrically, if the edges from
$Y$ to $X_{1}$ are blue, then the edges from $Y$ to $X_{2}$ are also
blue.
\end{claim}

\begin{proof}
Suppose, for a contradiction, that there exist parts $X_{1}, X_{2}
\subseteq A_{j}$ and $Y \subseteq B_{j}$ with red edges from $Y$ to
$X_{1}$ and blue edges from $Y$ to $X_{2}$. By Fact~\ref{Fact:blue},
the edges from $X_{1}$ to $X_{2}$ are blue. Then the same argument
as in the proof of Claim~\ref{Claimn18} produces a contradiction.
\end{proof}

By minimality of $t$, $A_{j}$ is a single part of the partition, but since $|A_{j}| > n$,
this is a contradiction, completing the proof in this subcase.

\begin{subcase}
$|B_{j}| = n$.
\end{subcase}

The outline of the proof of this subcase is similar to the outline
of the proof of the previous subcase. First an analogue of
Claim~\ref{Claimn18} which follows from exactly the same proof.

\begin{claim}\label{Claimn18b}
The blue edges in the reduced graph of $G'$ restricted to $A_{j}$ form a union of cliques.
\end{claim}

By Claim~\ref{Claimn18b}, the blue edges in the reduced graph of
$G'$ restricted to $A_{j}$ form a union of cliques, say $J_{1},
J_{2}, \dots, J_{p}$. If $p \geq 2$, then all edges between pairs of
these cliques must be red. By the same argument as the proof of
Claim~\ref{Claimn18}, we see that for every part $X \subseteq J_{i}$
and for every part $Y \subseteq B_{j}$, if the edges from $X$ to $Y$
are red, then all the edges from $Y$ to $J_{i} \setminus X$ are red.
By minimality of $t$, this means that each set $J_{i}$ is a single
part of the Gallai partition of $G'$, meaning that the reduced graph
restricted to $A_{j}$ is one red clique.

Note that $|A_{j} \cup H_{j}| = |G'| - n \geq 2n + 1$ so since every
part has order at most $n$, for every part $H_{i} \subseteq A_{j}$
we have $|A_{i}| \geq n$. By minimality of $t$, there is at least
one part $H_{i} \subseteq A_{j}$ with red edges to a part $H_{\ell}
\subseteq B_{j}$. Choose two vertices $u \in H_{i}$ and $v \in
H_{j}$ and choose-in-order $n$ vertices with red edges to $u$ from
$H_{j} \setminus \{v\}, H_{\ell}, A_{i}$ and let $S$ be this red
star. Then since $|H_{\ell}| \geq 1$, $u$ has at least $n$ remaining
incident red edges in $G' \setminus S$, making a disjoint red copy
of $K_{1, n}$, a contradiction completing the proof.
\end{proof}

\section{For general $m$ and $n$} \label{Sec:General}

In this section we give a proof for Theorem \ref{Thm:GallaiRamseyGeneralCase}.

We first give a general lower bound on the Gallai-Ramsey number for
$K_{1,n}\cup K_{1,m}$.

\begin{lemma}\label{Lemma:GeneralLower}
For $k \geq 2$, $m\leq n$,
$$
{\rm gr}_{k}(K_{3} :K_{1,n}\cup K_{1,m})\geq \begin{cases}
2n+m+k-5 & \text{ if $n$ is even,}\\
2n+m+k-4 & \text{ if $n$ is odd.}
\end{cases}
$$
\end{lemma}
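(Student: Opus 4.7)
The plan is to prove this lower bound by constructing, for each $k \geq 2$, an explicit Gallai $k$-coloring of $K_{N(k)-1}$ containing no monochromatic $K_{1,n} \cup K_{1,m}$, where $N(k) - 1$ equals $2n + m + k - 6$ when $n$ is even and $2n + m + k - 5$ when $n$ is odd. The construction proceeds by induction on $k$, in the same spirit as the inductive constructions used for Lemmas \ref{Lemma:Small_m_Large_n_Lower} and \ref{Lemma:EqualLower}.

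The inductive step from $G_{k-1}$ to $G_k$ is standard: append a single new vertex $v$ and color all of its incident edges with a fresh color $k$. Any triangle through $v$ contains two edges of color $k$, so no rainbow triangle is introduced; and because $v$ is the only vertex with positive color-$k$ degree, no second vertex can center a color-$k$ star $K_{1,m}$, ruling out a monochromatic $K_{1,n} \cup K_{1,m}$ in color $k$. Monochromatic copies in the earlier colors are inherited from $G_{k-1}$, and the vertex count grows by one, matching $N(k) - N(k-1) = 1$.

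The content of the argument lies in the base case. The starting point is Grossman's extremal two-clique configuration behind $R(K_{1,n} \cup K_{1,m}) \geq 2n + 1$: take two cliques $A, B$ each of size $n$ colored internally in color $1$, with all edges between them in color $2$. In color $1$ every vertex has degree $n - 1$, so there is no color-$1$ copy of $K_{1,n}$ at all; in color $2$, any $K_{1,n}$ centered in $A$ must use all of $B$ as leaves (and symmetrically with $A$ and $B$ swapped), so a disjoint color-$2$ $K_{1,m}$ is impossible. This yields a Gallai $2$-coloring of $K_{2n}$ with no monochromatic $K_{1,n} \cup K_{1,m}$. To reach exactly $N(k) - 1$ vertices at the base level, the plan is to graft on a color-$3$ hub vertex (whose incident edges are all color $3$, with the property that no second vertex has color-$3$ degree exceeding $1$) together with additional vertices distributed either into a supplementary part carrying color $3$ or inserted carefully into $A$ and $B$ with restricted edge-colors chosen to match the parity of $n$.

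The main obstacle is the delicate verification that these augmentations do not open up a monochromatic $K_{1,n} \cup K_{1,m}$ in any color. A naive augmentation fails in predictable ways: making several additional vertices into color-$3$ hubs produces two centers of a color-$3$ $K_{1,n}$ and, since $|A \cup B| \geq n + m$, a disjoint color-$3$ $K_{1,m}$; folding additional vertices into $A$ or $B$ raises the color-$1$ degree of the corresponding clique to at least $n$, which combined with the other clique's color-$1$ structure yields a disjoint color-$1$ $K_{1,n}$ and $K_{1,m}$ straddling the two cliques. The subtle step, and the reason the bound is $2n + m + k - 5$ (respectively $2n + m + k - 4$) rather than something larger, is the precise balance between these two failure modes: a parity-dependent assignment of the $m - O(1)$ extra vertices, using exactly one hub per fresh color and using a Gallai partition with carefully chosen internal colors to absorb the remainder without creating a rainbow triangle or a disjoint pair of monochromatic stars of sizes $n$ and $m$.
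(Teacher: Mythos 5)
Your inductive step (one hub vertex per new color) and your base $2$-coloring of $K_{2n}$ are both fine, but the lemma is not proved: the whole content of this lower bound is the explicit construction of the $m-O(1)$ additional vertices in the base case, and you never exhibit it --- the key step is described only as ``a parity-dependent assignment \dots using a Gallai partition with carefully chosen internal colors,'' which is a statement of intent, not a construction. Worse, the route you chose cannot in general be completed. Suppose the extra vertices form a set $C$ of size about $m$ attached to your cliques $A,B$ of order $n$. As you observe, no edge from $C$ to $A\cup B$ may receive color $1$ (else some $a\in A$ reaches color-$1$ degree $n$ and any $b\in B$ centers a disjoint color-$1$ $K_{1,m}$); and since every $a\in A$ already centers a color-$2$ $K_{1,n}$ with leaf set exactly $B$, each $c\in C$ can have at most about $m$ color-$2$ edges into $A\cup C$ and at most about $m$ into $B\cup C$. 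Hence each $c\in C$ sends at least $2n-2m-2$ color-$3$ edges into $A\cup B$. If $m\leq\frac{n-2}{3}$, any two vertices $c,c'\in C$ then center a color-$3$ copy of $K_{1,n}\cup K_{1,m}$ with all leaves in $A\cup B$ (pick $n$ leaves for $c$, and $m$ of the at least $n-2m-2\geq m$ remaining color-$3$ neighbors of $c'$). So for $4\leq m\leq\frac{n-2}{3}$ no coloring of the added edges works: the cliques of order $n$ are already at the degree threshold and leave no room to attach $\Theta(m)$ further vertices.

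The paper's construction is structurally different and is the idea you are missing: $G_3$ is a blow-up of the unique triangle-free $2$-coloring of $K_5$, with (for odd $n$) four parts of order $\frac{n-1}{2}$ and one part of order $m$, each part an internally color-$1$ clique, and colors $2$ and $3$ forming the two complementary blown-up $5$-cycles; this has $2n+m-2$ vertices, and every monochromatic degree is one of $\frac{n-3}{2}$, $m-1$, $n-1$, or $\frac{n-1}{2}+m$, so for $m\leq\frac{n-1}{2}$ no vertex even attains degree $n$ in any color and there is nothing further to check. The hub vertices for colors $4,\dots,k$ are then appended exactly as in your inductive step. Two caveats you would inherit even on this route: for $\frac{n+3}{2}\leq m\leq n-2$ a vertex with color-$2$ edges to the $K_m$ part does reach degree $n$ and one can then find a disjoint monochromatic $K_{1,m}$ in an adjacent part, so the verification genuinely requires $m$ bounded away from $n$; and for $k=2$ the claimed bound exceeds $R(K_{1,n}\cup K_{1,m})=\max\{n+2m,\,2n+1,\,n+m+3\}$ once $m\geq 5$ and $n\geq m+4$, so no construction can exist there and the statement must be read with $k\geq 3$.
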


\begin{proof}
For odd $n$, we prove this result by inductively constructing a
coloring of $K_{t}$ where $t=2n+m+k-5$ which contains no rainbow
triangle and no monochromatic copy of $K_{1,n}\cup K_{1,m}$. We
construct $G_{3}$ by making four copies of $K_{\frac{n-1}{2}}$ in
color $1$ and one copy of $K_m$ in color $1$, and then inserting
edges of colors $2$ and $3$ between the copies to form a blow-up of
the unique $2$-colored $K_{5}$ which contains no monochromatic
triangle. This coloring clearly contains no rainbow triangle and no
monochromatic copy of $K_{1,n}\cup K_{1,m}$, completing the base
construction.

For even $n$, we prove this result by inductively constructing a
coloring of $K_{t}$ where $t=2n+m+k-6$ which contains no rainbow
triangle and no monochromatic copy of $K_{1,n}\cup K_{1,m}$. We
construct $G_{3}$ by making three copies of $K_{\frac{n-2}{2}}$ in
color $1$ and one copy of $K_m$ in color $1$ and one copy of
$K_{\frac{n}{2}}$ in color $1$, and then inserting edges of colors
$2$ and $3$ between the copies to form a blow-up of the unique
$2$-colored $K_{5}$ which contains no monochromatic triangle. This
coloring clearly contains no rainbow triangle and no monochromatic
copy of $K_{1,n}\cup K_{1,m}$, completing the base construction.

For $i$ with $3 \leq i \leq k - 1$, given $G_{i}$, we construct
$G_{i + 1}$ by adding a single vertex with all edges to $G_{i}$
having color $i + 1$. This coloring certainly contains no rainbow
triangle or monochromatic copy of $K_{1,n}\cup K_{1,m}$ and has the
desired order, completing the construction.
\end{proof}

The lower bounds in Theorem~\ref{Thm:GallaiRamseyGeneralCase} can be
derived from Lemmas \ref{Lemma:Small_m_Large_n_Lower} and
\ref{Lemma:GeneralLower}. To complete the proof of
Theorem~\ref{Thm:GallaiRamseyGeneralCase}, we provide the following
upper bound.

\begin{lemma}\label{Lemma:GeneralUpper}
For $k \geq 2$, $m\leq n$,
$$
{\rm gr}_{k}(K_{3} :K_{1,n}\cup K_{1,m})\leq \begin{cases}
3n+3m+k-3 & \text{ if $n$ is even,}\\
3n+3m+k-2 & \text{ if $n$ is odd.}
\end{cases}
$$
\end{lemma}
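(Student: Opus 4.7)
My plan is to follow the template of Lemma~\ref{Lemma:EqualUpper}. Let $G$ be a Gallai $k$-coloring of $K_N$ with $N$ as stated. Define $T$ to be a maximal vertex set such that each $v \in T$ has all edges in a single color to $G \setminus T$, subject to $|G \setminus T| \geq n + m$. Since two vertices of $T$ sharing their outgoing color would center a monochromatic $K_{1,n} \cup K_{1,m}$, we get $|T| \leq k$. Apply Theorem~\ref{Thm:G-Part} to $G' := G \setminus T$, choose a Gallai partition with the fewest parts $t$, using red and blue between parts. By an analogue of Claim~\ref{Claim12}, neither red nor blue can be a $T$-color (else a $T$-vertex together with a high red-degree vertex of $G'$, whose existence is forced by the Gallai partition, would close out the desired subgraph), so $|T| \leq k - 2$ and $|G'| \geq 3n + 3m - 1$ (even $n$) or $3n + 3m$ (odd $n$).

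For the subsequent case analysis, first suppose $t = 2$ (the case $2 \leq t \leq 3$ reduces here by minimality of $t$), writing $H_1, H_2$ with red between them and $|H_1| \geq |H_2|$. Maximality of $T$ forces $|H_2| \geq 2$, hence $|H_1| \geq |G'|/2 \geq n + m$. If $|H_2| \geq m + 1$, one vertex per part yields a pair of red star centers (the $H_2$-vertex centers $K_{1,n}$ with leaves in $H_1$, the $H_1$-vertex centers $K_{1,m}$ with leaves in $H_2$); if $|H_2| \leq m$, two vertices of $H_2$ serve as red centers with $n + m$ disjoint leaves inside $H_1$.

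For $t \geq 4$, for each part $H_j$ let $R_j, B_j$ denote its red and blue external neighborhoods, so $|R_j| + |B_j| = |G'| - |H_j|$. If some non-singleton part $H_j$ has $\max(|R_j|, |B_j|) \geq n + m$, then two vertices of $H_j$ produce two disjoint monochromatic stars in the dominant color — this covers every non-singleton part with $|H_j| \leq 2m$ since $|G'| - 2m \geq 2(n + m) - 1$. Otherwise every non-singleton part has size $\geq 2m + 1$, and we locate two parts $H_i, H_j$ with sizes $\geq m + 1$ and $\geq n + 1$ joined by a single color, yielding a one-center-per-part construction. When $t = |G'|$ (all singletons), the 2-colored reduced $K_t$ already has order at least ${\rm R}(K_{1,n} \cup K_{1,m})$ by Theorem~\ref{Thm:RamseyUnionStars}, immediately producing the required subgraph in the underlying graph.

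The hardest part will be the $t \geq 4$ case in the intermediate regime where part sizes fall between $m + 1$ and $n$: here one must exploit simultaneously the reduced graph's structure (no vertex can have single-color edges to parts of combined size $\geq n + m + 1$ without creating a monochromatic $K_{1, n + m + 1}$, hence a $K_{1,n} \cup K_{1,m}$) and the global inventory $\sum_j |H_j| = |G'| \geq 3(n + m) - 1$ to force a favorable pair of parts. Delicate case work parallel to the proofs of Proposition~\ref{Prop:Small_m_Large_n_Upper} and Lemma~\ref{Lemma:EqualUpper} will be needed, and Theorem~\ref{Thm:K3-Star} may have to be invoked inside the largest part to locate a second star disjoint from one found across the partition.
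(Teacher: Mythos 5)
Your setup (the set $T$, the bound $|T|\le k-2$, the reduction to $t\ge 4$, and the use of Theorem~\ref{Thm:RamseyUnionStars} when all parts are singletons) matches the paper. But the proposal is not a proof: you explicitly leave the ``intermediate regime'' of the $t\ge4$ case as future ``delicate case work,'' and that regime is exactly where the content of the lemma lies, so there is a genuine gap. The irony is that no delicate case work is needed. The paper first records (Fact~\ref{Fact:Claim32}) that \emph{every} part satisfies $|H_i|\le n+m-1$: if $|H_i|\ge n+m$, then among the $|G'|-|H_i|\ge 2n+2m$ outside vertices, at least two lie in the same color class toward $H_i$ and center a monochromatic $K_{1,n}\cup K_{1,m}$ with all leaves in $H_i$. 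Then, taking any non-singleton part $H_1$ (which exists by the Ramsey bound) and splitting $G'\setminus H_1$ into its red and blue neighborhoods $A$ and $B$, one of $H_1,A,B$ has order at least $|G'|/3\ge n+m$; by the Fact it is not $H_1$, so say $|A|\ge n+m$, and two vertices of $H_1$ center the two red stars inside $A$. This single counting step disposes of all part sizes at once, including the range $m+1\le |H_j|\le n$ that you flag as hard; your restriction of the ``two centers in one part'' argument to parts of size at most $2m$ is an artificial limitation, since the relevant condition is only $|G'|-|H_j|\ge 2(n+m)-1$, which holds for every part once the Fact is in place.

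Two further issues. First, your parenthetical claim that a monochromatic $K_{1,n+m+1}$ ``hence'' contains a monochromatic $K_{1,n}\cup K_{1,m}$ is false as stated: a single star has one center and one component, while $K_{1,n}\cup K_{1,m}$ requires two disjoint components each with an edge; you need a second vertex with the same color class (e.g.\ a second vertex of the same part) to produce the second center. Second, in your $t=2$ analysis the case split on $|H_2|\ge m+1$ versus $|H_2|\le m$ is unnecessary (two vertices of $H_2$ always suffice once $|H_1|\ge n+m$), though it is not wrong. The overall verdict is that the skeleton is right but the decisive argument for $t\ge4$ is missing, and the missing piece is a short global count rather than the case analysis you anticipate.
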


\begin{proof}
We only give the proof of the case when $n$ is odd since the even
case can be proved similarly. Suppose $k\geq 3$ and let $G$ be a
Gallai coloring of $K_{N}$ where
$$
N=\begin{cases}
3n+3m+k-3 & \text{ if $n$ is even,}\\
3n+3m+k-2 & \text{ if $n$ is odd.}
\end{cases}
$$

Let $T$ be a maximal set of vertices where each vertex of $T$ has
all edges in a single color to $G \setminus T$ with the additional
assumption that $|G \setminus T| \geq n + m$. Then in order to avoid
a monochromatic copy of $K_{1, n} \cup K_{1, m}$, there is at most
one vertex in $T$ with edges of each color to $G \setminus T$,
meaning that $|T| \leq k$. Let $G' = G \setminus T$ so $|G'| = |G| -
|T| \geq 3n + 3m - 2$.

Since $G'$ contains no rainbow triangle, by
Theorem~\ref{Thm:G-Part}, there is a Gallai partition of $G'$, say
using red and blue on edges between the parts of this partition. In
order to avoid a monochromatic copy of $K_{1, n} \cup K_{1, m}$, if
there is a vertex $v_{i} \in T$ with color $i$ on all edges to $G'$,
there is no vertex with at least $m$ incident edges in a single
color $i$ within $G'$. By Lemma~\ref{Lemma:StarStability}, this
means that red and blue cannot appear on edges between $T$ and $G'$
so $|T| \leq k - 2$ and $|G'| \geq 3n + 3m$.

If $2 \leq t \leq 3$, then by minimality of $t$, we may assume that
$t = 2$. Then one part, say $H_{1}$, has $|H_{1}| \geq
\frac{|G'|}{2} > n + m$. If there are at least $2$ vertices in
$H_{2}$, then there is a monochromatic copy of $K_{1, n} \cup K_{1,
m}$ centered at these two vertices with edges to $H_{1}$ so $|H_{2}|
= 1$. We may then move that vertex in $H_{2}$ to $T$, contradicting
the maximality to $|T|$. We may therefore assume that $t \geq 4$. By
the same argument, we arrive at the following fact.

\begin{fact}\label{Fact:Claim32}
For each $i$ with $1 \leq i \leq t$, we have $|H_{i}| \leq n + m - 1$.
\end{fact}

By Theorem~\ref{Thm:RamseyUnionStars}, we know that $R(K_{1,n}\cup
K_{1,m})=\max\{n+2m, 2n+1,n+m+3\}<3m+3n\leq|G'|$, and hence there
exists a part, say $H_{1}$, with $|H_{1}|\geq 2$. Let $A$ be the set
of vertices with red edges to $H_{1}$ and let $B$ be the set of
vertices with blue edges to $H_{1}$. Since $|G'| \geq 3n + 3m$, one
of $H_{1}, A$, or $B$ must have order at least $\frac{|G'|}{3} \geq
n + m$. By Fact~\ref{Fact:Claim32}, the big set is not $H_{1}$, so
without loss of generality, suppose $|A| \geq n + m$. Then since
$|H_{1}| \geq 2$, these two vertices form the centers of a red copy
of $K_{1,n}\cup K_{1,m}$, a contradiction, completing the proof.
\end{proof}

\end{document}